\documentclass[10pt,a4paper]{amsart}
\usepackage[T1]{fontenc}
\usepackage{lmodern}
\usepackage{amsmath,amssymb,amsthm,mathtools}
\usepackage{booktabs,tabularx,array,multirow}
\usepackage[margin=26mm]{geometry}
\usepackage{microtype}
\usepackage{xcolor}
\usepackage{enumitem}

\usepackage[pagebackref=false,colorlinks=true,linkcolor=blue!45!black,citecolor=blue!45!black,urlcolor=blue!45!black]{hyperref}

\newtheorem{theorem}{Theorem}[section]
\newtheorem{lemma}[theorem]{Lemma}
\newtheorem{proposition}[theorem]{Proposition}
\newtheorem{corollary}[theorem]{Corollary}

\theoremstyle{remark}
\newtheorem{remark}[theorem]{Remark}

\newtheorem{mainthm}{\bf Theorem}

\DeclareMathOperator{\Aut}{Aut}
\DeclareMathOperator{\Out}{Out}
\DeclareMathOperator{\Syl}{Syl}
\DeclareMathOperator{\soc}{soc}
\DeclareMathOperator{\GL}{GL}

\DeclareMathOperator{\Sp}{Sp}

\DeclareMathOperator{\PSU}{PSU}
\DeclareMathOperator{\PGL}{PGL}

\DeclareMathOperator{\Inn}{Inn}
\DeclareMathOperator{\Inndiag}{Inndiag}
\DeclareMathOperator{\SD}{\mathsf{SD}}

\newcommand{\F}{\mathbb F}

\newcommand{\Cent}[2]{\mathrm{C}_{#1}(#2)}
\newcommand{\Norm}[2]{\mathrm{N}_{#1}(#2)}

\newcommand{\OddCore}[1]{\mathrm{O}_{2'}(#1)}

\newcommand{\OmegaOne}[1]{\Omega_1(#1)}
\makeatletter
\newtoks\address@list
\newcommand{\addaddress}[3]{%
  \address@list=\expandafter{\the\address@list
    \textsc{#1, #2.} \par
    \textit{Email address}: \href{mailto:#3}{\textsf{#3}} \par
    \addvspace{\medskipamount}%
  }%
}
\newcommand{\printaddresses}{%
  \AtEndDocument{\bigskip{\footnotesize
    \the\address@list
  }}%
}
\makeatother

\title[Brauer's Problem 33]{Sylow $2$-Subgroups of Finite Groups with a Single Conjugacy Class of Involutions}

\author{Tianze Li and Yu Zeng}

\addaddress{Tianze Li}{School of Mathematical Sciences, Huaqiao University, Quanzhou, Fujian, 362021, China}{tzli@hqu.edu.cn}
\addaddress{Yu Zeng (Corresponding Author)}{Department of Mathematics, Suzhou University of Technology, Changshu, Jiangsu, 215500, China}{yuzeng2004@163.com}

\printaddresses  

\date{}
\subjclass[2020]{20D20, 20D05, 20D06, 20E45}
\keywords{Involution, Sylow $2$-subgroup, almost simple group, Suzuki $2$-group, Brauer's Problem 33}
\hypersetup{
  pdftitle={Brauer's Problem 33},
  pdfsubject={Finite groups with one conjugacy class of involutions},
  pdfkeywords={involution, Sylow 2-subgroup, almost simple group, Suzuki 2-group}
}
\begin{document}
\raggedbottom
\begin{abstract}
We solve Brauer's Problem~33 by determining the finite $2$-groups that
occur as Sylow $2$-subgroups of finite groups with a single conjugacy
class of involutions. Besides the Sylow $2$-subgroups arising from the
relevant non-abelian simple groups in Table \ref{tab:simple-list}, these are precisely the homocyclic,
generalized quaternion, and Suzuki $2$-groups. 
The classification of finite simple groups is used in this paper.
\end{abstract}
\maketitle

\section{Introduction}

In 1963, Brauer posed the following problem as Problem~33 in
\cite{Brauer}: 
determine the finite $2$-groups that occur as Sylow
$2$-subgroups of finite groups with exactly one conjugacy class of
involutions. The problem asks how a strong global fusion condition
restricts the local structure of a Sylow $2$-subgroup.

Early work proceeded in two related directions. Shaw \cite{Shaw} studied
Sylow $2$-subgroups of solvable groups with one conjugacy class of
involutions, while Higman's work on Suzuki $2$-groups \cite{Higman}
initiated the study of non-abelian $2$-groups admitting automorphisms that
act transitively on their involutions. Subsequent work of Shult
\cite{Shult}, Gross \cite{Gross}, Bryukhanova \cite{Bryukhanova}, and
Wilkens \cite{Wilkens} developed this automorphism-theoretic approach. In
particular, Gross conjectured that a finite $2$-group whose automorphism
group is transitive on its involutions must be homocyclic, generalized
quaternion, or a Suzuki $2$-group. Li and Zhu recently proved this
conjecture as part of their classification of finite $p$-groups whose
elements of order $p$ form a single fusion class
\cite[Theorem~1.1]{LZ}. On the non-solvable side, the
Alperin--Brauer--Gorenstein analysis of groups with semidihedral Sylow
$2$-subgroups \cite{ABG} is relevant to the exceptional almost simple
family that arises below.

The connection with Brauer's problem is natural but incomplete. If $P$
is a Sylow $2$-subgroup of a finite group whose involutions form one conjugacy
class, then the involutions of $P$ are conjugate in the ambient group.
The conjugating elements, however, need not normalize $P$, so this
conjugacy need not be induced by automorphisms of $P$. Thus the theorem
of Li and Zhu does not apply until this fusion gap has been closed. The
purpose of this paper is to close that gap and to combine the resulting
reduction with the analysis of almost simple groups.

\begin{mainthm}\label{thm:brauer}
Let $P$ be a nontrivial finite $2$-group. Then $P$ is a Sylow
$2$-subgroup of a finite group whose involutions form one conjugacy class
if and only if at least one of the following holds:
\begin{enumerate}[label=\textup{(\roman*)}]
\item $P$ is homocyclic;
\item $P$ is generalized quaternion;
\item $P$ is a Suzuki $2$-group;
\item $P$ is isomorphic to a Sylow $2$-subgroup of one of the non-abelian
simple groups in Table~\ref{tab:simple-list}.
\end{enumerate}
\end{mainthm}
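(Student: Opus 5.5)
The plan has two directions.

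\emph{Sufficiency.} For a homocyclic group $P=(\mathbb Z/2^n)^r$, I take the semidirect product $P\rtimes C$, where $C$ is a Singer cycle of order $2^r-1$ acting on $P$ through a lift of $\GL_r(\F_2)$. One way to realise this is to let $P$ be the additive group of the Galois ring $GR(2^n,r)$ and let $C$ act by multiplication by Teichm\"uller units. The involutions form $\Omega_1(P)\setminus\{1\}$. This set is the nonzero part of the socle $2^{n-1}GR$, and $C$ permutes it transitively. Since $P$ is normal and abelian, $G$-conjugacy of involutions reduces to $C$-orbits. For a generalized quaternion $P$ there is a unique involution, so $G=P$ itself works. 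For a Suzuki $2$-group, Higman's definition supplies a cyclic automorphism group $C$ permuting the involutions transitively, and I take $G=P\rtimes C$. For case (iv), the simple groups of Table~\ref{tab:simple-list} are by construction groups with one class of involutions.

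\emph{Necessity.} Let $G$ be a counterexample of minimal order with Sylow $2$-subgroup $P$ and a single class of involutions. I will show that either the fusion of involutions in $P$ is realised inside $\Norm G P$, or $G$ reduces to an almost simple group. In the first case $\Aut(P)$ is transitive on involutions, and \cite[Theorem~1.1]{LZ} gives (i)--(iii). The reduction runs as follows.
\begin{itemize}
\item Replace $G$ by $G/\OddCore G$. This is harmless: involution classes and Sylow $2$-subgroups are preserved by coprime quotients, via the Frattini argument and Glauberman-type lifting.
\item If $O_2(G)\neq1$, then $O_2(G)$ contains an involution, so all involutions lie in $\Omega_1(Z(O_2(G)))$, or at least in $O_2(G)$. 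Consider $P\cap O_2(G)=O_2(G)$, which is normal in $G$. Conjugation by $G=O_2(G)\Norm G P$ (Frattini) moves involutions of $Z(O_2 G)\cap\Omega_1$ around, and I aim to show this is induced by $\Norm G P$. Alternatively, reduce to $P=O_2(G)$ by showing that $P$ has all its involutions inside $O_2(G)$ and applying Alperin's fusion theorem.
\item Otherwise $F^*(G)$ is a direct product of nonabelian simple groups $L_i$ with $|L_i|$ even. One class of involutions forces $G$ to permute the $L_i$ transitively. If there are $k\ge2$ factors, products $t_1t_2$ with $t_i\in L_i$ are not conjugate to elements of a single factor, a contradiction. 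Hence $F^*(G)=L$ is simple and $G$ is almost simple.
\end{itemize}

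\emph{Almost simple case.} I then use the CFSG to list almost simple $G$ with $L\le G\le\Aut(L)$ having one involution class in $G$. First, $L$ itself must contain every involution of $G$, or else the involutions of $G\setminus L$ are outer and cannot be $G$-conjugate to inner ones; this is automatic since $L\trianglelefteq G$. So all involutions lie in $L$, and $L$ has at most a few classes fused by outer automorphisms. This gives the groups of Table~\ref{tab:simple-list} (for example $\PSU_3$, $L_2(q)$, Suzuki and Ree groups, $L_3(q)$ with $q$ odd). It also gives exceptional cases like $\PGL$-type extensions with semidihedral Sylows, handled by \cite{ABG}, whose Sylows are then shown to be among (i)--(iv).

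\emph{Main obstacle.} I expect the hardest step to be the case $O_2(G)\ne1$, that is, closing the ``fusion gap''. There I would try Alperin--Goldschmidt conjugation families to show that each essential subgroup controlling fusion of involutions still forces $\Aut(P)$-transitivity.
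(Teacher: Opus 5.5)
\textbf{There is a genuine gap in the non-almost-simple case.} Your sufficiency argument and your reduction to two cases are fine, and agree in substance with the paper. The two cases are: an elementary abelian normal subgroup $E$ containing all involutions, or $G$ almost simple. The gap is exactly the step you call the main obstacle, and it is where most of the paper's work lies.

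In the case $O_2(G)\neq1$ you give no argument, and the one concrete step you state is false. The Frattini argument does not give $G=O_2(G)\Norm{G}{P}$, because $P$ is not a Sylow $2$-subgroup of $O_2(G)$. Since $O_2(G)\le P$, that identity would say $P\trianglelefteq G$. This already fails for $\mathrm{SL}_2(5)$, which has a unique involution, $O_{2'}=1$, $O_2\cong\mathsf{C}_2$, and $P\cong\mathsf{Q}_8$ not normal. The same example shows you cannot reduce to $P=O_2(G)$.

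Alperin--Goldschmidt conjugation families are not the right tool either. All involutions lie in $E=\Omega_1(\mathrm{Z}(O_2(G)))\trianglelefteq G$, so their fusion is determined entirely by the linear group $H=G/\Cent{G}{E}$. The Burnside argument of Lemma~\ref{lem:fusion-gap} shows that $\Norm{G}{P}$ is transitive on $E^\#$ if and only if $P\le\Cent{G}{E}$, that is, if and only if $|H|$ is odd. So you must prove a global statement: a transitive $H\le\GL(E)$ induced by a group with no involutions outside $E$ has odd order. The difficulty is that an involution of $H$ need not, a priori, lift to an involution of $G$. No analysis of fusion inside $P$ addresses this.

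\textbf{The paper supplies three missing ingredients.}
\begin{enumerate}
\item A correct Frattini step and a normaliser lemma. Put $C=\Cent{G}{E}$, $S=P\cap C$ and $N=\Norm{G}{S}$; then $G=CN$. Lemma~\ref{lem:normalizer} shows that if $X\le H$ has odd order and $\Cent{E}{X}=1$, then $\Norm{H}{X}$ has odd order. To see this, write the preimage of $X$ as $S\rtimes Y$ by Schur--Zassenhaus, and cover $\Norm{H}{X}$ by $\Norm{N}{Y}$ via Frattini. Then $\Norm{N}{Y}\cap S=\Cent{S}{Y}=1$, so $\Norm{N}{Y}$ contains no involution.
\item Hering's classification of transitive linear groups, used in Lemma~\ref{lem:linear}. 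In every even-order case it produces such an $X$ with an involution in its normaliser, except $\GL_3(2)$. There the fixed-point-free odd-order subgroups, of orders $7$ and $21$, all have normaliser of order $21$.
\item Alperin's theorem, in Lemma~\ref{lem:exclude}, to eliminate $\GL_3(2)$.
\end{enumerate}
Only once $|H|$ is known to be odd does the theorem of Li and Zhu apply.

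\textbf{The almost simple case is also only gestured at.} You need Yamaki's result that $\Aut(L)$ never fuses distinct $L$-classes of involutions into one orbit, so that $L$ itself has a single class. For even outer index you need the reduction to $\mathrm{PGL}_2^*(p^{2a})$-type extensions with Sylow subgroup $\SD_{2(p^{2a}-1)_2}$. You also need the fact (Lemma~\ref{lem:mixed-redundant}) that this group is already a Sylow $2$-subgroup of $\mathrm{PSL}_3(p^a)$ or $\PSU_3(p^a)$. Your phrase ``shown to be among (i)--(iv)'' requires exactly that identification.
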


The proof begins by factoring out the largest normal subgroup of odd
order and showing that the resulting group has a unique minimal normal
subgroup. If this subgroup is elementary abelian, a normalizer argument,
Hering's classification of transitive linear groups, and Alperin's
rank-three theorem close the fusion gap, after which the theorem of Li
and Zhu applies. If the minimal normal subgroup is non-abelian simple, the
ambient group is almost simple. The analysis then reduces to the
involution classes and outer automorphisms of finite simple groups. The
only additional even-index ambient groups are mixed diagonal--field
extensions of $\mathrm{PSL}_2(p^{2a})$ with $p>2$; their Sylow $2$-subgroups are
semidihedral and already occur in $\mathrm{PSL}_3(p^a)$ or
$\PSU_3(p^a)$. The converse follows from explicit realizations of the
three automorphism-theoretic families and from the simple groups in
Table~\ref{tab:simple-list}.

The paper is organized as follows. Section 2 gives the preliminary
reductions and identifies the fusion gap. Section 3 completes
the abelian-socle case by establishing the required
linear-group restriction and eliminating its three-dimensional exception.
Section 4 treats almost simple groups, first at odd outer index
and then at even outer index. The final section combines the two
branches to prove
Theorem~\ref{thm:brauer}.

\section*{AI declaration}
With Gross' conjecture now resolved and assuming the Classification of Finite Simple Groups (CFSG), the line of research on Brauer's Problem 33 has become fairly well-understood.

The authors used GPT-6 as an aid in the work reported in this paper.
In the proof of Lemma~\ref{lem:linear}, GPT-6 helped identify the
restriction of the natural six-dimensional module for $G_2(q)$,
in characteristic $2$, to its subgroup $\mathrm{SL}_3(q)$. GPT-6 also helped locate the relevant
reference for the split Cayley algebra model and the embedding
$\mathrm{SL}_3(q)\hookrightarrow G_2(q)$; see
\cite[Definition~2.1.9, Example~2.1.12, and Theorems~2.2.1 and~2.2.6]
{AHW}. The authors checked the resulting argument and take
responsibility for the mathematical content of the paper.

\enlargethispage{2\baselineskip}

\section{Preliminaries}

\subsection{Notation}

We mainly follow the notation from \cite{Huppert} for group theory and \cite{GLS} for finite simple
groups. 
Throughout, all groups are finite, and $p$ denotes a prime.
For a positive integer $n$ and
a prime $p$, we write $n_p$ to denote the $p$-\emph{part} of $n$, i.e., the maximal $p$-power divisor of $n$.

Let $G$ be a finite group.  
We use $G^{\#}$ to denote the set of nontrivial elements of $G$;
$\mathrm{soc}(G)$ to denote the \emph{socle} of $G$;
$\mathcal{I}(G)$ to denote the set of involutions in $G$.
We write $G\in \mathbf{OC}_2$ if 
$\mathcal{I}(G)$ is nonempty and forms a single $G$-conjugacy class.
For $E\unlhd G$, the group of automorphisms of $E$ induced by
conjugation in $G$ is
\[
 \Aut_G(E):=\Norm{G}{E}/\Cent{G}{E}=G/\Cent{G}{E}.
\]
For a positive integer $t$, let $G^t$ denote the direct product of
$t$ copies of $G$, with convention $G^0=1$. Similarly, for a vector space $V$
over a field $\mathbb{F}$, let $V^t$ denote the direct sum of $t$
copies of $V$, with convention $V^0=0$.

For specific groups, we write $\mathsf{C}_n$ for a cyclic group of
order $n$; $\mathsf{ES}(3_+^3)$ for the extraspecial group of
order $3^3$ and exponent $3$;
$\mathsf{A}_n$ and $\mathsf{S}_n$ for the alternating and symmetric groups of degree $n$, respectively;
$\mathsf{D}_{2^n}$ ($n\geq 3$), $\mathsf{Q}_{2^n}$ ($n\geq 3$), and $\mathsf{SD}_{2^n}$ ($n\geq 4$)
for the dihedral, generalized quaternion, and semidihedral groups of
order $2^n$, respectively.

Other notation will be introduced as needed.

\subsection{Auxiliary results}

We begin by factoring out the odd core and determining the possible
socles of the resulting groups.

\begin{lemma}\label{lem:oddcore}
Let $G$ be a finite group, and set $N=\mathrm{O}_{2'}(G)$. Then $G$ has
exactly one conjugacy class of involutions if and only if $G/N$ does.
\end{lemma}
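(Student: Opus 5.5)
The plan is to compare the involutions of $G$ with those of $\bar G=G/N$, where $N=\OddCore{G}$, and to show that conjugacy classes of involutions correspond bijectively. Since $|N|$ is odd, every involution of $G$ maps to an involution of $\bar G$, so the map $\mathcal{I}(G)\to\mathcal{I}(\bar G)$ is well defined. Conjugate involutions in $G$ have conjugate images, so the induced map on classes is well defined. It therefore suffices to prove two things: the map on classes is surjective, and it is injective. Then $G$ has exactly one class of involutions if and only if $\bar G$ does, and $\mathcal{I}(G)$ is nonempty exactly when $\mathcal{I}(\bar G)$ is.

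\emph{Surjectivity.} Let $\bar x=xN$ be an involution of $\bar G$. Then $x^2\in N$, so the order of $x$ is $2m$ with $m$ odd. Write $x=ts$ with $t,s$ commuting powers of $x$, where $t$ has order $2$ and $s$ has odd order $m$. Because $s^m=1$ and $\bar x$ has order $2$, we get $\bar s=1$, that is, $s\in N$. Hence $\bar t=\bar x$, so $\bar x$ is the image of an involution of $G$.

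\emph{Injectivity.} This is the main step. Suppose $t,u\in\mathcal{I}(G)$ have $\bar t$ conjugate to $\bar u$ in $\bar G$. After replacing $u$ by a $G$-conjugate, we may assume $\bar t=\bar u$, so $u\in tN$. Then $\langle t\rangle$ and $\langle u\rangle$ are Sylow $2$-subgroups of the group $N\langle t\rangle$, whose order is $2|N|$. By Sylow's theorem they are conjugate by an element of $N\langle t\rangle\le G$, so $t$ and $u$ are conjugate in $G$. One could instead invoke Schur--Zassenhaus or the Frattini argument, but Sylow's theorem in $N\langle t\rangle$ already suffices.

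I expect no real obstacle; the only point needing care is injectivity. There the key observation is that two involutions in the same coset of $N$ generate complements, namely Sylow $2$-subgroups, of the same group $N\langle t\rangle$, and such Sylow subgroups are conjugate. Combining surjectivity and injectivity gives a bijection between the classes of involutions of $G$ and of $\bar G$, which proves the lemma.
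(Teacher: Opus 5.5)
Your proof is correct and has the same structure as the paper's: you lift each involution of $G/N$ to an involution of $G$, and you show that two involutions in the same coset of $N$ are conjugate inside $N\langle t\rangle$. The only difference is the tools. Where the paper invokes the existence and conjugacy parts of Schur--Zassenhaus, you split $x$ into commuting $2$- and $2'$-parts and apply Sylow's theorem, which for a quotient of order $2$ amounts to the same argument.
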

\begin{proof}
Set $\overline{G}=G/N$, and let $\overline{x}\in\overline{G}$ be an
involution. The preimage of $\langle\overline{x}\rangle$ in $G$ is an
extension of the odd-order group $N$ by a group of order $2$. By the
Schur--Zassenhaus theorem, $N$ has a complement of order $2$ in this
preimage. Thus every involution of $\overline{G}$ lifts to an involution
of $G$. It follows that if $\mathcal{I}(G)$ is a single $G$-conjugacy
class, then $\mathcal{I}(\overline{G})$ is a single
$\overline{G}$-conjugacy class.

Conversely, suppose that $\mathcal{I}(\overline{G})$ is a single
$\overline{G}$-conjugacy class, and let $x,y\in\mathcal{I}(G)$.
After conjugating $x$ by an element of $G$, we may assume that
$\overline{x}=\overline{y}$. Then
\[
N\langle x\rangle=N\langle y\rangle,
\]
and $\langle x\rangle$ and $\langle y\rangle$ are complements to $N$ in
this group. The conjugacy part of the Schur--Zassenhaus theorem shows
that $x$ and $y$ are $N$-conjugate. Hence $\mathcal{I}(G)$ is a single
$G$-conjugacy class.
\end{proof}

\begin{lemma}\label{lem:monolithic}
Let $G$ be a finite group with $\mathrm{O}_{2'}(G)=1$. 
Suppose that
$\mathcal{I}(G)$ is a single $G$-conjugacy class. Then $G$ has a unique
minimal normal subgroup $N=\soc(G)$, and one of the following holds:
\begin{enumerate}[label=\textup{(\roman*)}]
\item $N\cong \mathsf{C}_2^n$ and $\mathcal{I}(G)=N^\#$. In particular,
$G$ acts transitively on $N^\#$;
\item $G$ is almost simple with socle $N$, and
$\mathcal{I}(G)=\mathcal{I}(N)$.
\end{enumerate}
\end{lemma}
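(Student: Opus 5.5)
The plan is to use the key fact that the involutions of $G$ form a single class $\mathcal{I}(G)$, so any nontrivial normal subgroup of even order contains all involutions. First I check that $G$ has even order, which is part of the hypothesis since $\mathcal{I}(G)\neq\varnothing$. Let $M$ be any minimal normal subgroup of $G$. Because $\mathrm{O}_{2'}(G)=1$, $M$ has even order and so contains an involution. As $M$ is normal and the involutions form one class, $\mathcal{I}(G)\subseteq M$. Now suppose $M_1\neq M_2$ are two minimal normal subgroups. Then $M_1\cap M_2=1$, yet both contain $\mathcal{I}(G)\neq\varnothing$, which is a contradiction. Hence $N=\soc(G)$ is the unique minimal normal subgroup, and $\mathcal{I}(G)=\mathcal{I}(N)$.

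Next I split according to the structure of the characteristically simple group $N\cong T^k$ with $T$ simple. If $T$ is abelian, then $T\cong\mathsf{C}_2$ since $N$ has even order, so $N\cong\mathsf{C}_2^n$. Every nontrivial element of $N$ is an involution, so $N^\#\subseteq\mathcal{I}(G)\subseteq N^\#$. This gives $\mathcal{I}(G)=N^\#$, and $G$ acts transitively on $N^\#$ by conjugation. That is (i).

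If $T$ is non-abelian, I write $N=T_1\times\cdots\times T_k$, and I need to show $k=1$ and $\Cent{G}{N}=1$. For the centralizer: $\Cent{G}{N}$ is normal, and if it were nontrivial it would contain the unique minimal normal subgroup $N$. Then $N$ would be abelian, which is false; so $\Cent{G}{N}=1$ and $G$ embeds in $\Aut(N)$.

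For $k=1$, take an involution $t\in T_1$, which exists by Feit--Thompson. If $k\ge2$, pick also an involution $t'\in T_2$. Then $t$ and $tt'$ are both involutions of $N$ but have different numbers of nontrivial coordinates. Conjugation by $G$ permutes the factors $T_i$, so it preserves the number of nontrivial coordinates. Thus $t$ and $tt'$ are not $G$-conjugate, a contradiction. So $k=1$, $N$ is non-abelian simple, and $N\le G\le\Aut(N)$, which means $G$ is almost simple. This is (ii).

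I do not expect a serious obstacle. The only points needing care are that $G$ permutes the simple direct factors of $N$ (they are the unique minimal normal subgroups of $N$), and that nonabelian simple groups have even order.
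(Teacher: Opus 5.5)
Your proof is correct and follows the paper's argument essentially step for step. Every minimal normal subgroup contains all involutions, which forces uniqueness, and the count of nontrivial coordinates of $t$ versus $tt'$ rules out $k\ge 2$. Two small points: your explicit check that $\Cent{G}{N}=1$ supplies a step the paper leaves implicit in ``consequently, $G$ is almost simple,'' and the appeal to Feit--Thompson is unnecessary, since the factors $T_i$ are isomorphic and $N$ has even order.
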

\begin{proof}
Let $N$ be a minimal normal subgroup of $G$. Since
$\mathrm{O}_{2'}(G)=1$, the group $N$ has even order and hence contains
an involution. As $\mathcal{I}(G)$ is a single $G$-conjugacy class and
$N$ is normal in $G$, we have $\mathcal{I}(G)\subseteq N^\#$ and thus
$\mathcal{I}(G)=\mathcal{I}(N)$. Any two minimal normal subgroups
therefore contain all involutions of $G$. Since distinct minimal normal
subgroups intersect trivially, $N$ is unique. Hence $N=\soc(G)$.

If $N$ is solvable, then $N\cong\mathsf{C}_2^n$, so
$\mathcal{I}(G)=N^\#$ and (i) holds. Suppose that $N$ is nonsolvable.
Then $N=S_1\times\cdots\times S_t$, where the $S_i$ are isomorphic
non-abelian simple groups. If $t>1$, choose involutions $a\in S_1$ and
$b\in S_2$. Conjugation by $G$ permutes the factors $S_i$, whereas $a$
and $ab$ have nonidentity components in different numbers of factors.
They therefore cannot be $G$-conjugate. Thus $t=1$, and $N$ is
non-abelian simple. Consequently, $G$ is almost simple with socle $N$,
as in (ii).
\end{proof}

\begin{lemma}\label{lem:fusion-gap}
Let $G \in \mathbf{OC}_2$, and let $P \in \mathrm{Syl}_2(G)$. 
Suppose that $E:=\mathrm{soc}(G)$ is solvable and $\mathrm{O}_{2'}(G)=1$.
 The following conditions are equivalent:
\begin{enumerate}[label=\textup{(\roman*)}]
\item $\Aut(P)$ is transitive on $\mathcal{I}(P)$;
\item $E\leq \mathrm{Z}(P)$;
\item $P\leq \Cent{G}{E}$.
\end{enumerate}
If these conditions hold, then $\Norm{G}{P}$ acts transitively on
$\mathcal{I}(P)$ by conjugation.
\end{lemma}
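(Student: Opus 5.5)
By Lemma~\ref{lem:monolithic}(i), $E\cong\mathsf{C}_2^n$ is the unique minimal normal subgroup of $G$ and $\mathcal{I}(G)=E^\#$. Since $E$ is a normal $2$-subgroup, $E\le P$, and therefore $\mathcal{I}(P)=E^\#$. We prove (ii)$\Leftrightarrow$(iii), then (ii)$\Rightarrow$(i) together with the final assertion, and finally (i)$\Rightarrow$(ii).

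\emph{(ii)$\Leftrightarrow$(iii).} Both conditions say that every element of $P$ commutes with every element of $E$, given $E\le P$. So they are equivalent.

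\emph{(iii)$\Rightarrow$ transitivity of $\Norm{G}{P}$, hence (i).} Assume $P\le C:=\Cent{G}{E}$. Then $C\unlhd G$ and $P\in\Syl_2(C)$. The Frattini argument gives $G=C\,\Norm{G}{P}$. Let $x,y\in E^\#$. By Lemma~\ref{lem:monolithic}(i), $y=x^g$ for some $g\in G$. Write $g=cn$ with $c\in C$ and $n\in\Norm{G}{P}$. Since $c$ centralizes $x$, we get $y=x^{cn}=x^n$. Thus $\Norm{G}{P}$ is transitive on $E^\#=\mathcal{I}(P)$. Conjugation by elements of $\Norm{G}{P}$ induces automorphisms of $P$, so (i) follows.

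\emph{(i)$\Rightarrow$(ii).} This is the main step. The set $\mathcal{I}(P)\cap \mathrm{Z}(P)$ is nonempty, because $\mathrm{Z}(P)\ne1$ and $\Omega_1(\mathrm{Z}(P))$ contains involutions. It is also $\Aut(P)$-invariant, since $\mathrm{Z}(P)$ is characteristic in $P$. If $\Aut(P)$ is transitive on $\mathcal{I}(P)$, this invariant set must be all of $\mathcal{I}(P)$. Hence every involution of $P$ is central in $P$. As $\mathcal{I}(P)=E^\#$, this gives $E\le\mathrm{Z}(P)$, which is (ii).

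I expect no real obstacle. The only point needing care is the identification $\mathcal{I}(P)=E^\#$, which uses $E\le P$ and $\mathcal{I}(G)=E^\#$ from Lemma~\ref{lem:monolithic}.
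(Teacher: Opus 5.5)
Your proof is correct and follows the paper's proof essentially step for step. You use the same identification $\mathcal{I}(P)=E^\#$, the same characteristic-subgroup argument with $\mathrm{Z}(P)$ for (i)$\Rightarrow$(ii), and the same observation that (ii) and (iii) both say $P$ centralizes $E$. The only variation is the transitivity step. You apply the Frattini argument to $\Cent{G}{E}\unlhd G$ to get $G=\Cent{G}{E}\,\Norm{G}{P}$. The paper instead takes $g$ with $x^g=y$ and corrects it by Sylow conjugacy inside $\Cent{G}{y}$, a Burnside-type argument. The two are interchangeable here.
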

\begin{proof}
By Lemma~\ref{lem:monolithic}, we have $\mathcal{I}(P)=E^\#$.
Suppose that (i) holds, and choose an involution in $\mathrm{Z}(P)$.
Since $\mathrm{Z}(P)$ is characteristic in $P$, transitivity of
$\mathrm{Aut}(P)$ on $\mathcal{I}(P)$ implies that every involution of
$P$ lies in $\mathrm{Z}(P)$. Thus (ii) holds. Conditions (ii) and (iii)
both assert that $P$ centralizes $E$, so they are equivalent.

Assume (ii), and let $x,y\in E^\#$. Choose $g\in G$ such that $x^g=y$.
Both $P^g$ and $P$ are Sylow $2$-subgroups of $\mathrm{C}_G(y)$.
By Sylow conjugacy, there exists $c\in\mathrm{C}_G(y)$ such that
\[
(P^g)^c=P.
\]
Then $gc\in\Norm{G}{P}$ and $x^{gc}=y$. 
So $\Norm{G}{P}$ is
transitive on $\mathcal{I}(P)$, proving the final assertion and (i).
\end{proof}

Lemma~\ref{lem:fusion-gap} shows that applying the theorem of Li and Zhu
requires proving that a Sylow $2$-subgroup centralizes the socle.
In the proof of Theorem~\ref{thm:solvable}, we apply the following
normalizer lemma inside the normalizer of a Sylow $2$-subgroup
of the centralizer of the socle.

For a $2$-group $P$, we write $\OmegaOne{P}$ for the subgroup
generated by the involutions of $P$.

\begin{lemma}\label{lem:normalizer}
Let $G$ be a finite group, and let $N \unlhd G$ be a normal subgroup admitting a normal Sylow $2$-subgroup $A$. 
Let $E = \Omega_1(A)$, and suppose that $N \leq \mathrm{C}_G(E)$ and $\mathcal{I}(G) \subseteq E$. 
Set $H = G/N$. 
If $X \leq H$ has odd order and $\mathrm{C}_E(X) = 1$, then $\mathrm{N}_H(X)$ has odd order.
\end{lemma}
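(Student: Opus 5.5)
The plan is to argue by contradiction. I will lift an involution of $\Norm{H}{X}$ to an element normalizing an odd-order complement of $A$. I will then use $\Cent{E}{X}=1$ to show that this normalizer is tiny, so that it must contain an involution lying outside $E$. That contradicts $\mathcal{I}(G)\subseteq E$.

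First, the setup. Since $A$ is a normal Sylow $2$-subgroup of $N\unlhd G$, it is characteristic in $N$, so $A\unlhd G$ and hence $E=\Omega_1(A)\unlhd G$. As $N\le\Cent{G}{E}$, the action of $G$ on $E$ factors through $H$. In particular, $\Cent{E}{Q}=\Cent{E}{X}$ for any subgroup $Q\le G$ with $QN/N=X$.

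Now suppose $\Norm{H}{X}$ has even order, and pick $\bar t\in\Norm{H}{X}$ of order $2$. Let $L$ and $K$ be the preimages in $G$ of $X$ and of $X\langle\bar t\rangle$, respectively, so that $L\unlhd K$ and $|K:L|=2$. Since $|L:A|=|L:N|\,|N:A|$ is odd, $A$ is a normal Sylow $2$-subgroup of $L$. By Schur--Zassenhaus, $L=AQ$ with $Q$ of odd order, and $QN/N=X$.

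By the Frattini argument (all complements of $A$ in $L$ are $L$-conjugate), we get $K=L\,\Norm{K}{Q}$. Hence $\Norm{K}{Q}L/L\cong K/L\cong \mathsf{C}_2$. Next, by Dedekind, $\Norm{L}{Q}=Q\,\Norm{A}{Q}$, and $\Norm{A}{Q}=\Cent{A}{Q}$, because $[\Norm{A}{Q},Q]\le A\cap Q=1$.

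The key step is to show $\Cent{A}{Q}=1$. Any involution of $\Cent{A}{Q}$ lies in $\Omega_1(A)=E$, hence lies in $\Cent{E}{Q}=\Cent{E}{X}=1$. So $\Cent{A}{Q}$ is a $2$-group without involutions, and is therefore trivial. It follows that $\Norm{K}{Q}\cap L=\Norm{L}{Q}=Q$, so $|\Norm{K}{Q}|=2|Q|$ with $|Q|$ odd.

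A Sylow $2$-subgroup of $\Norm{K}{Q}$ thus has order $2$ and is generated by an involution $u$. Since $|Q|$ is odd, $u\notin Q=\Norm{K}{Q}\cap L$, so $u\notin L\supseteq A\supseteq E$. This contradicts $\mathcal{I}(G)\subseteq E$. Hence $\Norm{H}{X}$ has odd order.

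The only delicate step is ensuring that the lift of $\bar t$ can be taken inside $\Norm{K}{Q}$, so that the coprime-action identity $\Norm{A}{Q}=\Cent{A}{Q}$ applies. The Frattini argument handles this, rather than lifting $\bar t$ arbitrarily.
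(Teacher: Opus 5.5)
Your proof is correct and follows essentially the same route as the paper. Both arguments take a Schur--Zassenhaus complement $Q$ to $A$ in the preimage $L$ of $X$, get $\Cent{A}{Q}=1$ from $\Omega_1(A)=E$ and $\Cent{E}{X}=1$, apply the Frattini argument, and use $\Norm{A}{Q}=\Cent{A}{Q}$. The difference is only in packaging: you argue by contradiction inside the index-$2$ overgroup $K$ of $L$ to exhibit an involution outside $E$, whereas the paper applies Frattini to all of $\Norm{G}{L}$ to get $\Norm{H}{X}\cong\Norm{G}{Q}/\Norm{N}{Q}$ with $\Norm{G}{Q}\cap E=1$, so that $\Norm{G}{Q}$ has odd order directly.
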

\begin{proof}
Let $L\leq G$ be the preimage of $X$, so that $L/N=X$.
Since $A$ is a $2$-group and $|L/A|=|X||N/A|$ is odd, the
Schur--Zassenhaus theorem gives $L=A\rtimes Y$ for a Hall
$2'$-subgroup $Y$ of $L$. Moreover, all Hall $2'$-subgroups of $L$
are conjugate.
We first show that $\mathrm{C}_A(Y)=1$. Otherwise,
$\mathrm{C}_A(Y)$ contains an involution $a$. Since $E=\Omega_1(A)$,
we have $a\in E$, and hence
$a\in\mathrm{C}_E(Y)=\mathrm{C}_E(X)=1$, a contradiction.

Applying the Frattini argument to $L\unlhd\mathrm{N}_G(L)$ and its
Hall $2'$-subgroup $Y$, we obtain
\[
\mathrm{N}_G(L)
 =L\,\mathrm{N}_{\mathrm{N}_G(L)}(Y)
 =A\bigl(\mathrm{N}_G(Y)\cap\mathrm{N}_G(L)\bigr)
 \leq N\,\mathrm{N}_G(Y).
\]
Conversely, both $N$ and $\mathrm{N}_G(Y)$ normalize $NY=L$, so
$N\mathrm{N}_{G}(Y)\leq \mathrm{N}_{G}(L)$. Therefore
\[
\mathrm{N}_H(X)=\mathrm{N}_{G/N}(L/N)= \mathrm{N}_{G}(L)/N
 =\mathrm{N}_G(Y)N/N\cong \mathrm{N}_{G}(Y)/\mathrm{N}_{N}(Y).
\]
Moreover,
$\mathrm{N}_G(Y)\cap A
 =\mathrm{N}_A(Y)
 =\mathrm{C}_A(Y)
 =1$.
Thus $\mathrm{N}_G(Y)\cap E=1$. Since every involution of $G$ lies in
$E$, the group $\mathrm{N}_G(Y)$ contains no involutions and hence has
odd order. Its quotient $\mathrm{N}_{H}(X)$ therefore also has odd order.
\end{proof}

\section{The abelian-socle case}

We now treat the abelian-socle case of Lemma~\ref{lem:monolithic}.
The key step is to prove that a Sylow $2$-subgroup centralizes the socle.
We establish this using Lemma~\ref{lem:normalizer}, Hering's
classification of transitive linear groups \cite{Hering}
(see also \cite[Appendix~1]{Liebeck}), and Alperin's theorem.
Lemma~\ref{lem:fusion-gap} then allows us to apply the theorem
of Li and Zhu.

We first record a module restriction needed in the $G_2$ case
of Lemma~\ref{lem:linear}. Using the split Cayley algebra,
we realize the restriction of the natural $G_2(q)$-module
to $\mathrm{SL}_3(q)$ as the direct sum of its natural module
and its dual.

Let $q$ be a power of $2$, and set $\mathbb{F}=\mathbb{F}_q$.
We use the Zorn matrix model of the split Cayley algebra
$\mathbf{O}=\mathbf{O}(\mathbb{F})$; see
\cite[Definition~2.1.9 and Example~2.1.12]{AHW}.
The underlying $8$-dimensional vector space consists of matrices
$a=\left(\begin{smallmatrix}\alpha&\mathbf{u}\\\mathbf{v}&\beta\end{smallmatrix}\right)$,
where $\alpha,\beta\in\mathbb{F}$ and
$\mathbf{u},\mathbf{v}\in\mathbb{F}^3$ are column vectors.
Multiplication is defined by
\[
\begin{pmatrix} \alpha & \mathbf{u} \\ \mathbf{v} & \beta \end{pmatrix}
\begin{pmatrix} \alpha' & \mathbf{u}' \\ \mathbf{v}' & \beta' \end{pmatrix}
=
\begin{pmatrix}
\alpha\alpha' + \mathbf{u}\cdot\mathbf{v}'
& \alpha\mathbf{u}' + \beta'\mathbf{u} - \mathbf{v}\times\mathbf{v}' \\
\alpha'\mathbf{v} + \beta\mathbf{v}' + \mathbf{u}\times\mathbf{u}'
& \beta\beta' + \mathbf{v}\cdot\mathbf{u}'
\end{pmatrix},
\]
where $\cdot$ and $\times$ denote the standard dot and cross products on
$\mathbb{F}^3$. The identity element is
$1_{\mathbf{O}}=\left(\begin{smallmatrix}1& \mathbf{0}\\ \mathbf{0}&1\end{smallmatrix}\right)$.
By \cite[Theorem~2.2.1]{AHW}, we may identify
$G_2(q)$ with $\mathrm{Aut}_{\mathbb{F}}(\mathbf{O})$.

Define the trace of $a$ by $\mathrm{tr}(a)=\alpha+\beta$.
Automorphisms of $\mathbf{O}$ preserve the trace and fix $1_{\mathbf{O}}$.
Thus $\mathbf{O}_0=\{a\in\mathbf{O}:\mathrm{tr}(a)=0\}$ is
$G_2(q)$-invariant and, since $\mathrm{char}(\mathbb{F})=2$,
contains the one-dimensional submodule $\mathbb{F}1_{\mathbf{O}}$.
Since $\dim\mathbf{O}_0=7$, the quotient
$V=\mathbf{O}_0/\mathbb{F}1_{\mathbf{O}}$ is the natural
$6$-dimensional $G_2(q)$-module over $\mathbb{F}$.
Every element of $\mathbf{O}_0$ has equal diagonal entries, so each coset
modulo $\mathbb{F}1_{\mathbf{O}}$ has a unique representative with both
diagonal entries zero. We therefore identify
$V$ with $\mathbb{F}^3\oplus\mathbb{F}^3$ by
\[
\begin{pmatrix} 0 & \mathbf{u} \\ \mathbf{v} & 0 \end{pmatrix} + \mathbb{F}1_{\mathbf{O}} \longleftrightarrow (\mathbf{u}, \mathbf{v}).
\]
For $A\in\mathrm{SL}_3(q)$, set
\[
\iota(A) \colon \begin{pmatrix} \alpha & \mathbf{u} \\ \mathbf{v} & \beta \end{pmatrix}
\longmapsto
\begin{pmatrix} \alpha & A\mathbf{u} \\ A^{-T}\mathbf{v} & \beta \end{pmatrix},
\]
where $A^{-T}:=(A^{-1})^T$. By \cite[Theorem~2.2.6]{AHW}, the assignment
$A\mapsto\iota(A)$ defines an embedding
$\iota:\mathrm{SL}_3(q)\hookrightarrow G_2(q)$.
Under the above identification, $\iota(A)$ sends
$(\mathbf{u},\mathbf{v})$ to $(A\mathbf{u},A^{-T}\mathbf{v})$.
Hence, if $W$ is the natural $3$-dimensional $\mathrm{SL}_3(q)$-module
and $W^*$ its dual, then $V=W\oplus W^*$ as a faithful
$\mathrm{SL}_3(q)$-module.

Recall that $G^{(\infty)}$ denotes the last term of the derived series of $G$.

\begin{theorem}\label{thm:hering}
Suppose that $G\leq \mathrm{GL}_n(2)$ acts transitively on the nonzero vectors of $V=\mathbb{F}_2^n$. Then one of the following holds:
\begin{enumerate}[label=\textup{(\roman*)}]
\item $G\leq \Gamma\mathrm{L}_1(2^n)$ is solvable;
\item 
$G^{(\infty)}$ is transitive on the nonzero vectors of $V$, and
\[
(G^{(\infty)},2^{n})\in \{ (\mathrm{SL}_{m}(q),q^{m}), (\mathrm{Sp}_{2m}(q),q^{2m}),(G_2(q),q^{6}) \}.
\]
Moreover, $V$ can be viewed as the natural $\mathbb{F}_{q}G^{(\infty)}$-module;
\item
$$(G,n)\in
\left\{
(\mathsf{A}_6,4),\,
(\mathrm{Sp}_4(2),4),\,
(\mathsf{A}_7,4),\,
(\mathrm{PSU}_3(3),6),\,
(G_2(2),6)
\right\}.$$
In these cases, the action of $G$ on $V$ is obtained from the corresponding natural low-dimensional modules as follows. For $G=\mathrm{Sp}_4(2)$, the module $V$ is its natural $4$-dimensional $\mathbb{F}_2$-module; for $G=\mathsf{A}_6\cong \mathrm{Sp}_4(2)'$, it is the restriction of this module to $\mathsf{A}_6$. 
For $G=\mathsf{A}_7$, it is the restriction of the natural $4$-dimensional $\mathbb{F}_2$-module of $\mathrm{SL}_4(2)\cong \mathsf{A}_8$. 
For $G=G_2(2)$, the module $V$ is its natural $6$-dimensional $\mathbb{F}_2$-module, while for $G=\mathrm{PSU}_3(3)\cong G_2(2)'$, it is the restriction of this $6$-dimensional module.
\end{enumerate}
\end{theorem}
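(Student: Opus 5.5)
This theorem is the specialization to characteristic $2$ of Hering's classification of finite linear groups that act transitively on nonzero vectors \cite{Hering}. I would not reprove that classification. Instead I would derive the statement from the standard formulation in \cite[Appendix~1]{Liebeck}. That formulation covers the affine $2$-transitive groups $V\rtimes G$ with $V=\mathbb{F}_p^n$, which are exactly the groups $G$ transitive on $V^\#$. It says that either $G\leq\Gamma\mathrm{L}_1(p^n)$, or $G$ has a normal subgroup $G_0$ from a short list (in the non-extraspecial cases), or $(G,p^n)$ lies in a finite list of sporadic examples. So the plan is: quote the theorem with $p=2$, then go through the list, keep what survives in characteristic $2$, and put it into the three shapes (i)--(iii).

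First, the solvable class. In Liebeck's list it consists of $G\leq\Gamma\mathrm{L}_1(p^n)$ together with the extraspecial cases, where $G$ normalizes an $r$-group of type $r^{1+2k}$ and $p^n\in\{5^2,7^2,11^2,23^2,3^4\}$. None of these extraspecial degrees is a power of $2$, so for $p=2$ only $\Gamma\mathrm{L}_1(2^n)$ remains. This gives (i); solvability holds because $\Gamma\mathrm{L}_1(2^n)\cong \mathsf{C}_{2^n-1}\rtimes\mathsf{C}_n$.

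Second, the infinite classes. There $G\trianglerighteq G_0$ with $G_0\in\{\mathrm{SL}_m(q),\ \mathrm{Sp}_{2m}(q),\ G_2(q)\ (q \text{ even})\}$, $q^{m}$ (respectively $q^{2m}$, $q^6$) equal to $2^n$, and $V$ the natural $\mathbb{F}_qG_0$-module. The class $\mathrm{SL}_2(13)$ occurs only for $p=3$ and drops out. To get (ii) with $G^{(\infty)}=G_0$, I need $G_0$ perfect. It is, except for $\mathrm{SL}_2(2)\cong\mathsf{S}_3$, $\mathrm{Sp}_2(2)=\mathrm{SL}_2(2)$, $\mathrm{Sp}_4(2)\cong\mathsf{S}_6$ and $G_2(2)\cong\mathrm{PSU}_3(3){:}2$.

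Third, I would sort these small cases. If $G_0=\mathrm{SL}_2(2)$, then $n=2$ and $G\leq\mathrm{GL}_2(2)=\Gamma\mathrm{L}_1(4)$, so we are in (i). Similarly $\mathrm{SL}_1(q)=1$, i.e. $m=1$, means $G\leq\Gamma\mathrm{L}_1(q)$. For $\mathrm{Sp}_4(2)$ and $G_2(2)$ one has $G\leq N_{\mathrm{GL}_n(2)}(G_0)=G_0$, so $G=\mathrm{Sp}_4(2)$ or $G=G_2(2)$; these are listed in (iii). If instead $G$ contains only the derived group, it is handled through the sporadic-type entries $\mathsf{A}_6$ and $\mathrm{PSU}_3(3)$. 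One must check that these really are transitive on nonzero vectors: $\mathsf{A}_6$ acts on the $15$ vectors, and $\mathrm{PSU}_3(3)$ on the $63$ vectors because $63\mid|\mathrm{PSU}_3(3)|=6048$ and the stabilizer is $4.\mathsf{S}_4$-like. The sporadic list in characteristic $2$ contains only $\mathsf{A}_6$ and $\mathsf{A}_7<\mathrm{SL}_4(2)$ with $2^4$, together with the $G_2(2)'$ case, giving the remaining entries of (iii) with the stated modules.

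The main obstacle is the bookkeeping in this step. Here the statement "$G^{(\infty)}$ is transitive" fails or changes form, and each low-rank exception has to be placed either in (i) or in (iii) and not in (ii). A final check is that in (ii) $G^{(\infty)}$ itself is transitive on $V^\#$. For the natural modules of $\mathrm{SL}_m(q)$ and $\mathrm{Sp}_{2m}(q)$ with $G_0$ perfect this is standard, and for $G_2(q)$ with $q$ even it follows from the transitivity of $G_2(q)$ on the nonzero vectors of its $6$-dimensional module, which is the natural $\mathrm{Sp}_6(q)$-module.
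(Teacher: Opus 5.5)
Your plan is the same as the paper's proof, which simply quotes Hering's theorem in the form of Liebeck's Appendix~1; specializing that list to $p=2$ does give exactly the stated cases (i)--(iii). Three small fixes to your bookkeeping:
\begin{enumerate}
\item[(a)] The standard $G_2$ class reads $G_0\unrhd G_2(q)'$ with $q$ even, and that class is where $\PSU_3(3)$ comes from, not the exceptional list.
\item[(b)] Getting $G^{(\infty)}=G_0$ needs $G/G_0$ solvable, not only $G_0$ perfect. This holds because $\Cent{G}{G_0}\le\mathbb{F}_q^\times$ by Schur's lemma and $G/G_0\Cent{G}{G_0}$ embeds in the solvable group $\Out(G_0)$.
\item[(c)] The transitivity of $G_2(q)$ on $V^\#$ should be cited, or checked via its point stabilizer $q^5{:}\mathrm{GL}_2(q)$. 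As written, you deduce it from itself.
\end{enumerate}
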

\begin{proof}
  This is the part of Hering's classification needed here; see
\cite{Hering} or \cite[Appendix~1]{Liebeck}.
\end{proof}

\begin{lemma}\label{lem:linear}
Let $V=\F_2^{ n}$ be such that $n\ge2$. 
Suppose that $H\le\GL(V)$ is transitive on the nonzero vectors of $V$. 
If
\begin{equation}\label{eq:restriction}
 |\Norm{H}{X}|\text{ is odd whenever }X\le H\text{ has odd order and }\Cent{V}{X}=0,
\end{equation}
then either $|H|$ is odd, or $n=3$ and $H=\GL_3(2)$.
\end{lemma}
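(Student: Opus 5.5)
We run through the cases of Theorem~\ref{thm:hering}. In each case with $|H|$ even we look for an odd-order subgroup $X\le H$ with $\Cent{V}{X}=0$ whose normalizer $\Norm{H}{X}$ contains an involution. This contradicts \eqref{eq:restriction} except in the single case $H=\GL_3(2)$.

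\emph{Case (i).} Here $H\le\Gamma\mathrm{L}_1(2^n)=\mathsf{C}_{2^n-1}\rtimes\mathsf{C}_n$. Transitivity on $V^\#$ forces $H$ to contain a normal subgroup $X=H\cap\mathsf{C}_{2^n-1}$ of odd order, and $X\ne1$. Every nontrivial element of $\mathsf{C}_{2^n-1}$ is fixed-point-free on $V^\#$, so $\Cent{V}{X}=0$. Since $X\unlhd H$, we get $\Norm{H}{X}=H$, and \eqref{eq:restriction} forces $|H|$ to be odd. One small point needs care: the transitive subgroup $H$ must meet $\mathsf{C}_{2^n-1}$ nontrivially. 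This holds because $|H|$ is divisible by $2^n-1$, while $|H/(H\cap \mathsf{C}_{2^n-1})|$ divides $n<2^n-1$ for $n\ge2$.

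\emph{Case (ii).} Here $H^{(\infty)}\in\{\mathrm{SL}_m(q),\Sp_{2m}(q),G_2(q)\}$ acts on its natural module, and $H$ normalizes $H^{(\infty)}$. We take $X$ to be a Singer-type cyclic torus of odd order acting fixed-point-freely, and we check that $\Norm{H^{(\infty)}}{X}$ has even order.
\begin{itemize}
\item For $\mathrm{SL}_m(q)$, take $X$ to be the subgroup of order $(q^m-1)/(q-1)$ in a Singer cycle. It acts irreducibly, except in degenerate small cases, and its normalizer contains the Frobenius element of order $m$. When $m$ is odd we instead use the torus of type $\mathsf{C}_{q^{m-1}-1}\times\mathsf{C}_{q-1}$ intersected with $\mathrm{SL}$. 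Its normalizer contains an involution swapping nothing, so a better choice is needed: for $m\ge 4$, take a block-diagonal torus $T$ from two irreducible pieces of dimensions $a,b$ with $a+b=m$ and $a\ne b$ even, or use the Weyl group element of the $\mathrm{GL}_{m/2}(q^2)$ torus.
\item For $\Sp_{2m}(q)$, take the cyclic torus of order $q^m+1$. It acts irreducibly on the natural module, and its normalizer contains an element of order $2m$ whose $m$-th power inverts the torus, which gives an involution.
\item For $G_2(q)$, take the torus of order $q^2+q+1$ or $q^2-q+1$. In both cases the normalizer has order $6(q^2\pm q+1)$.
\end{itemize}
The main obstacle is $\mathrm{SL}_m(q)$ with $m$ odd. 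For $m=3$ and $q=2$, the odd subgroups $X$ with $\Cent{V}{X}=0$ are exactly the Singer subgroups $\mathsf{C}_7$, whose normalizer is $7{:}3$ of odd order. Other odd subgroups of $\GL_3(2)$ have order $3$ and fix a vector. So $\GL_3(2)$ satisfies \eqref{eq:restriction}, and this is the genuine exception. For $m=3$ and $q>2$, we take $X$ to be the torus $\mathsf{C}_{q-1}^2\cap \mathrm{SL}_3(q)$ of diagonal matrices. It contains elements with three distinct eigenvalues, hence has no fixed vectors, and it is normalized by permutation matrices, including an involution. For odd $m\ge5$, use the diagonal torus of $\mathrm{SL}_m(q)$ when $q>2$; when $q=2$, use $X=\mathsf{C}_{7}\times\mathsf{C}_{2^{m-3}-1}$ from a decomposition $3+(m-3)$, which is normalized by an involution acting on the $(m-3)$-dimensional block via the Frobenius. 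This handles $m-3\ge2$ even, and the remaining sizes are treated similarly. For even $m$, we use $\mathsf{C}_{q^{m/2}+1}\le\Sp_m(q)\le\mathrm{SL}_m(q)$ as in the symplectic case.

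\emph{Case (iii).} For $\mathsf{A}_6$, $\Sp_4(2)$ and $\mathsf{A}_7$, a $5$-cycle acts fixed-point-freely on $\F_2^4$, and its normalizer in $\mathsf{A}_6$ is $\mathsf{D}_{10}$. For $\PSU_3(3)$ and $G_2(2)$, an element of order $7$ acts on $\F_2^6$ with no fixed vectors, since the sum of its two nontrivial $3$-dimensional constituents has no trivial constituent. Its normalizer in $\PSU_3(3)$ is $7{:}3$, so instead we take an element of order $3$ from the class $3B$ that is fixed-point-free; its centralizer contains an involution. In every case \eqref{eq:restriction} fails, and the lemma follows.
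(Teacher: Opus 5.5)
Your strategy is the paper's: go through Hering's list and, in each even-order case, exhibit an odd-order $X$ with $\Cent{V}{X}=0$ and an involution in $\Norm{H}{X}$. Several of your choices of $X$ differ from the paper's but work, for example the diagonal torus normalized by a transposition matrix (which has determinant $1$ in characteristic $2$) when $q>2$, and the splitting $\F_8\oplus\F_{2^{m-3}}$ when $q=2$.

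\textbf{The gap: the step for $\PSU_3(3)$ fails as written.} In $\mathbb{ATLAS}$ notation, the class $3B$ of $\PSU_3(3)$ has centralizer of order $9$, so no involution centralizes a $3B$-element. Moreover, $3B$-elements are not fixed-point-free on $V$. To see this, let $X\in\Syl_3(\PSU_3(3))$, so $X\cong\mathsf{ES}(3_+^3)$. Over $\F_4$, the module $V$ restricts to $X$ as $U\oplus\overline U$ with $U$ faithful of dimension $3$. The faithful degree-$3$ character vanishes off $\mathrm{Z}(X)$, so each non-central element of $X$ has eigenvalues $1,\omega,\omega^2$ on $U$, and hence a $2$-dimensional fixed space on $V$. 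These non-central elements are exactly the $3B$-elements.

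The class you want is $3A$, i.e.\ a generator $z$ of $\mathrm{Z}(X)$. It acts as a nontrivial scalar on $U$ and on $\overline U$, so $\Cent{V}{z}=0$. Also $\Norm{H}{\langle z\rangle}\geq\Norm{H}{X}\geq\mathsf{ES}(3_+^3)\rtimes\mathsf{C}_8$, which has even order. Alternatively, take $X$ itself, as the paper does, using that $V|_X$ is faithful and irreducible. In either form, $\Cent{V}{X}=0$ needs an actual argument; naming a class does not supply it.

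\textbf{Smaller repairs.}
\begin{itemize}
\item Your first $\mathrm{SL}_m(q)$ bullet contains statements that are impossible for odd $m$: a splitting $a+b=m$ with $a\neq b$ both even, and a $\GL_{m/2}(q^2)$ torus. Drop that bullet. Your later case split is complete by itself: even $m$ via $\mathsf{C}_{q^{m/2}+1}\leq\Sp_m(q)$; $q>2$ via the diagonal torus; $q=2$ with odd $m\geq5$ via $\F_8\oplus\F_{2^{m-3}}$; and $m=3$, $q=2$, which forces $H=\GL_3(2)$.
\item For the diagonal torus, having three distinct eigenvalues is not why $\Cent{V}{X}=0$: $\mathrm{diag}(1,a,a^{-1})$ has three distinct eigenvalues and fixes $e_1$. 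The correct reason is that every coordinate character is nontrivial on $X$ when $q>2$.
\item In the $G_2(q)$ case you assert both $\Cent{V}{T}=0$ and $|\Norm{G_2(q)}{T}|=6|T|$ without proof. For the torus of order $q^2+q+1$, the first needs exactly the restriction $V|_{\mathrm{SL}_3(q)}=W\oplus W^*$ that the paper derives from the Zorn model. The second is a Weyl-group fact you must cite. Note that the normalizing involution then lies outside $\mathrm{SL}_3(q)$, because $\Norm{\mathrm{SL}_3(q)}{T}=T\rtimes\mathsf{C}_3$ has odd order. The paper avoids this by taking $X=X_3\leq\mathrm{SL}_3(q)$, whose normalizer in $\mathrm{SL}_3(q)$ already contains an involution.
\end{itemize}
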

\begin{proof}
 By Theorem~\ref{thm:hering}, either
$H\leq\Gamma\mathrm{L}_1(2^n)$, or $H$ is non-solvable, in which case 
either 
$$(H^{(\infty)},2^n)
\in
\{
(\mathrm{SL}_m(q),q^m),
(\mathrm{Sp}_{2m}(q),q^{2m}),
(G_2(q),q^6)\},$$
or
$$(H,n)\in \{ (\mathsf{A}_6,4),
(\Sp_4(2),4),
(\mathsf{A}_7,4),
(\PSU_3(3),6),
(G_2(2),6) \}.$$
We consider these cases in turn. 
To exclude a case, it suffices to find a subgroup $L\leq H$ and an odd-order
subgroup $X\leq L$,
such that $\Cent{V}{X}=0$ and an involution in
$\Norm{L}{X}$.

\medskip\noindent\emph{The semilinear case.}
Suppose that $H\le\Gamma \mathrm{L}_1(2^n)$. 
Set $X=H\cap \mathrm{GL}_{1}(2^{n})$. If $X=1$, then $|H|\le n$, whereas transitivity gives $2^n-1\mid |H|$, which is impossible for $n\geq2$.
Hence $X\ne 1$. 
Every nonidentity scalar multiplication is fixed-point-free on $V^\#$,
so $\Cent{V}{X}=0$. 
Since $X\unlhd H$, condition~\eqref{eq:restriction} implies that $|H|$ is odd.

\medskip\noindent\emph{The special linear case.}
Suppose that $H^{(\infty)}=\mathrm{SL}_m(q)$.
By Theorem \ref{thm:hering}, we may regard $V$ as the natural $\mathbb{F}_qH^{(\infty)}$-module.
Let $\mathbb{X}$ be the subgroup of $\mathbb{F}_{q^{2}}^{\times}$ of order $q+1$,
and set $t=\lfloor m/2\rfloor$ and $s=m-2t$.
Then $s\in \{ 0,1 \}$.
We identify the natural module $V$ with $\mathbb{F}_4 \oplus \mathbb{F}_{2^{m-2}}$ when $t>s=1$ and $q = 2$,
and with 
$\mathbb{F}_{q^2}^{t-1} \oplus \mathbb{F}_{q^2} \oplus \mathbb{F}_q^s$ whenever either $s=0$ or $q > 2$.

Suppose that $s=1$ and $q=2$. 
If
$H^{(\infty)}=\mathrm{SL}_3(2)$, then $(H,n)=(\mathrm{GL}_3(2),3)$ because 
$\mathrm{SL}_3(2)=\mathrm{GL}_3(2)$.
For $t>1$, we have $V=\mathbb{F}_4 \oplus \mathbb{F}_{2^{m-2}}$ with $m>3$ odd.
Let 
\[
X=\{d_{\lambda,\mu}:\lambda\in\mathbb{F}_4^\times,
\ \mu\in\mathbb{F}_{2^{m-2}}^\times\}~\text{with}~(x,y)^{d_{\lambda,\mu}}=(\lambda x,\mu y).
\]
Then $X$ has odd order, and $\Cent{V}{X}=0$. 
Note that $\det(d_{\lambda,\mu})=\lambda^{3}\mu^{2^{m-2}-1}=1$,
and so $X\leq\mathrm{SL}_m(2)$.
The involution $\sigma$ given by
$(x,y)^\sigma=(x^2,y)$ normalizes $X$, again contradicting
condition~\eqref{eq:restriction}.

Now suppose that $s=0$.
Then $V=(\mathbb{F}_{q^{2}})^{ t}$.
Define
\begin{equation}\label{eq:x_2}
  X_{2t}=\{d_\lambda:\lambda\in \mathbb{X}\}~\text{with}~(x_1,\dots,x_{t})^{d_\lambda}=(\lambda x_1,\dots, \lambda x_{t}),
\end{equation}
for $(x_1,\dots, x_{t})\in V$.
For each $\lambda\in \mathbb{X}$,
since $\lambda^{q+1}=1$, $d_\lambda$ has determinant $1$. 
Hence $X_{2t}\leq H^{(\infty)}$ and $\mathrm{C}_{V}(X_{2t})=0$.
The map
\[
(x_1,\dots,x_t)^\sigma=(x_1^q,\dots,x_t^q)
\]
defines an involution $\sigma\in H^{(\infty)}$, and
$(d_\lambda)^\sigma=d_{\lambda^q}$. Thus
$\sigma\in\Norm{H^{(\infty)}}{X_{2t}}$, contrary to
condition~\eqref{eq:restriction}.

It remains to consider $s=1$ and $q>2$. 
Then $V=(\F_{q^2})^{ t-1}\oplus \mathbb{F}_{q^{2}}\oplus \mathbb{F}_q$.
Define 
\begin{equation}\label{eq:x_3}
X_{2t+1}=\{d_{\lambda,\mu}:\lambda\in \mathbb{X}, \mu\in\F_{q^2}^{\times}\}~\text{with}~(x_1,\dots,x_{t-1},x_{t},z)^{d_{\lambda,\mu}}=(\lambda x_1,\dots, \lambda x_{t-1},\mu x_{t}, \mu^{-(q+1)}z),
\end{equation}
for $(x_1,\dots, x_{t},z)\in V$.
It is straightforward to check that $X_{2t+1}\leq H^{(\infty)}$ and $\mathrm{C}_{V}(X_{2t+1})=0$.
Define $\sigma\in H^{(\infty)}$ by 
$$(x_1,\dots, x_{t},z)^{\sigma}=(x_1^{q},\dots, x_{t}^{q},z^{q}).$$
Then $\sigma$ is an involution and $(d_{\lambda,\mu})^{\sigma}=d_{\lambda^q,\mu^{q}}$.
Hence $\sigma\in\Norm{H^{(\infty)}}{X_{2t+1}}$, again contradicting
condition~\eqref{eq:restriction}.

\medskip\noindent\emph{The symplectic case.}
Suppose that $H^{(\infty)}=\Sp_{2m}(q)$. 
By Theorem \ref{thm:hering}, we regard $V$ as the natural module of $H^{(\infty)}$ over $\mathbb{F}_q$.
Decompose $V$ as an orthogonal sum
of $m$ hyperbolic planes, each identified with $\mathbb{F}_{q^2}$. The
subgroup fixing every summand contains $\mathrm{Sp}_2(q)^m$, acting
naturally on the respective planes.
Since $\mathrm{Sp}_2(q)=\mathrm{SL}_2(q)$,
we take $X=(X_2)^{m}$, where $X_2$ is defined in \eqref{eq:x_2} with $t=1$.
Then $\mathrm{C}_{V}(X)=0$, 
\[
 \mathrm{N}_{\mathrm{SL}_2(q)}(X_2)^{m}= \mathrm{N}_{\mathrm{Sp}_2(q)^{m}}(X)\leq \mathrm{N}_{H}(X).
\]
The special linear case shows that $2\mid |\mathrm{N}_{\mathrm{SL}_2(q)}(X_2)|$,
and hence $2 \mid |\mathrm{N}_{H}(X)|$.
This excludes the symplectic case, including
$(H,n)=(\mathrm{Sp}_4(2),4)$.

\medskip\noindent\emph{The $G_2$ case.} Suppose that $H^{(\infty)}=G_2(q)$.
Then $q>2$.
By Theorem \ref{thm:hering}, we may view $V$ as the natural $H^{(\infty)}$-module over $\mathbb{F}_q$.
The preceding discussion provides a subgroup $L\leq H^{(\infty)}$ isomorphic to $\mathrm{SL}_3(q)$
such that, as an $L$-module, 
\[
V=W\oplus W^*,
\] 
where $W$ is the natural $L$-module and $W^*$ is its dual.
Identifying $L$ with $\mathrm{SL}_3(q)$,
we set $X=X_3$, where $X_3$ is defined in
\eqref{eq:x_3} with $t=1$. 
Then $\mathrm{C}_{V}(X)=\mathrm{C}_{W}(X)\oplus\mathrm{C}_{W^*}(X)=0$.
The special linear case gives $2\mid |\mathrm{N}_{\mathrm{SL}_3(q)}(X_3)|$,
and therefore $2\mid |\mathrm{N}_{H^{(\infty)}}(X)|$, 
which contradicts~\eqref{eq:restriction}.

\medskip\noindent\emph{The four-dimensional exceptions.}
Suppose that $(H,n)\in \{ (\mathsf{A}_6,4),(\mathsf{A}_7,4) \}$.
Let $X$ be a subgroup of $H$ of order $5$.
Since the multiplicative order of $2$ modulo $5$ is $4$, 
the restriction to $X$ of a faithful
$4$-dimensional representation over $\F_2$ has no trivial constituent.
Thus $\mathrm{C}_{V}(X)=0$.
A $5$-cycle is inverted by a double transposition
in the corresponding alternating group, so $\Norm{H}{X}$ has even order.
This contradicts condition~\eqref{eq:restriction}.

\medskip\noindent\emph{The six-dimensional exceptions.}
It remains to consider $(H,n)\in \{ (\mathrm{PSU}_{3}(3),6),(G_2(2),6) \}$.
Since $G_2(2)'\cong \mathrm{PSU}_{3}(3)$ is non-abelian simple,
we have $(H^{(\infty)},n)=(\mathrm{PSU}_{3}(3),6)$.
By the $\mathbb{ATLAS}$ \cite{Atlas}, the group $H^{(\infty)}$ has a maximal subgroup $M\cong \mathsf{ES}(3_+^{3}) \rtimes \mathsf{C}_{8}$.
Let $X$ be a Sylow $3$-subgroup of $M$.
Then $\mathrm{N}_{H^{(\infty)}}(X)$ has even order, and $V$ is a faithful $\mathbb{F}_2 X$-module.
Since $\mathbb{F}_2$ is not a splitting field for $X$ but $\mathbb{F}_4$ is, all faithful irreducible $\mathbb{F}_2X$-modules have dimension $6$.
 Hence $V$ is a faithful irreducible $\mathbb{F}_2X$-module, 
 and therefore $\mathrm{C}_{V}(X)=0$.
This is the final contradiction.
\end{proof}

The only even-order group allowed by Lemma~\ref{lem:linear}
is $\GL_3(2)$. The following consequence of Alperin's theorem
excludes its occurrence as the induced action on a normal
elementary abelian subgroup containing all involutions.

\begin{lemma}\label{lem:exclude}
There is no group $G$ with $\OddCore{G}=1$ and a normal subgroup $E\cong (\mathsf{C}_2)^3$ such that $\Aut_G(E)\cong\GL_3(2)$ and every involution of $G$ belongs to $E$. 
\end{lemma}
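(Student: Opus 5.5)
The plan is to argue by contradiction and reduce everything to a statement about a Sylow $2$-subgroup, which Alperin's rank-three theorem then rules out. Suppose $G$ exists, and let $P\in\Syl_2(G)$. Since every involution of $G$ lies in the normal subgroup $E\cong(\mathsf{C}_2)^3$, we have $E\le P$ and $\mathcal{I}(P)=E^\#$. In particular $P$ has $2$-rank $3$, $\OmegaOne{P}=E$, and $E$ is the unique elementary abelian subgroup of $P$ of order $8$.

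Next I will record how $P$ acts on $E$. Set $C=\Cent{G}{E}$, so $G/C\cong\GL_3(2)$. The image of $P$ in $G/C$ is a Sylow $2$-subgroup of $\GL_3(2)$, namely $\mathsf{D}_8$, the full unitriangular group. Hence $P/\Cent{P}{E}\cong\mathsf{D}_8$ acts faithfully on $E$ as the full group of upper unitriangular matrices. This yields the following local configuration:
\begin{itemize}[label=--]
\item $P$ is a $2$-group;
\item $E=\OmegaOne{P}\cong(\mathsf{C}_2)^3$ is normal in $P$;
\item $\Aut_P(E)$ is a Sylow $2$-subgroup of $\GL(E)$;
\item in the ambient group, $\Norm{G}{E}=G$ induces all of $\GL_3(2)$ on $E$, so $\GL_3(2)$-fusion occurs.
\end{itemize}

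The main step is to bring this configuration under Alperin's theorem on $2$-groups of $2$-rank three (equivalently, his result on fusion in groups with such Sylow $2$-subgroups). I would apply it in the following form: a $2$-group $P$ in which every involution lies in a normal subgroup $E\cong(\mathsf{C}_2)^3$, and on which the normalizer of $E$ in an ambient group with trivial odd core induces $\GL_3(2)$, cannot exist. The hypothesis $\OddCore{G}=1$ is used here, together with Lemma~\ref{lem:oddcore}, to discard odd-order normal sections. If the available form of Alperin's theorem instead concludes that such a $P$ must have $\Cent{P}{E}=E$ and be of a specific small type, I would finish by direct computation.

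That computation goes as follows. Take $g\in P$ acting on $E$ as a transvection, so $|[E,g]|=2$. For $e\in E$ we have
\[
(ge)^2=g^2\,[g,e]\,e^2=g^2[g,e].
\]
The aim is to show that some element of the coset $gE$, or of a suitable coset $gc$ with $c\in\Cent{P}{E}$, squares to $1$. That element is an involution outside $E$, which is the desired contradiction.

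I expect the main obstacle to be this matching of hypotheses: making sure that the $\GL_3(2)$-fusion really does force an involution outside $E$, rather than merely restricting $\Cent{P}{E}$. Lemma~\ref{lem:normalizer} is of no help here, because for $X$ of order $7$ the normalizer $\Norm{\GL_3(2)}{X}\cong\mathsf{C}_7\rtimes\mathsf{C}_3$ already has odd order. The contradiction must therefore come from the internal structure of $P$, that is, from Alperin's theorem, and not from a normalizer-parity argument.
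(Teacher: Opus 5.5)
Your proposal reaches for the same tool as the paper, Alperin's rank-three theorem, but it never actually takes the decisive step. The ``form'' of Alperin's theorem you propose to apply is a restatement of the lemma itself. You also leave open whether the theorem forces an involution outside $E$ or only restricts $\Cent{P}{E}$. So, as written, the proposal assumes what is to be proved.

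The missing observation is that Alperin's result is stated for the ambient group and $C=\Cent{G}{E}$, not for $P$, and its hypotheses are immediate here:
\begin{enumerate}
\item $E\le C$, because $E$ is abelian.
\item Every involution of $C$ is an involution of $G$ and so lies in $E$; hence the involutions of $C$ are exactly the elements of $E^{\#}$.
\item $G/C\cong\Aut_G(E)\cong\GL_3(2)=\Aut(E)$.
\item $\OddCore{G}=1$.
\end{enumerate}
These are the hypotheses of Alperin's Theorem~1. His Corollary~2 then gives an involution in $G\setminus C$, which is impossible because every involution of $G$ lies in $E\le C$. That is the entire proof. Lemma~\ref{lem:oddcore} plays no role; $\OddCore{G}=1$ is simply one of Alperin's hypotheses.

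Passing to $P\in\Syl_2(G)$ discards exactly these inputs: you keep only $P/\Cent{P}{E}\cong\mathsf{D}_8$ rather than the full $G/C\cong\GL_3(2)$, and you lose $\OddCore{G}=1$. So your remark that the contradiction must come from the internal structure of $P$ points away from the argument actually used.

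Your fallback computation does not close the gap either.
\begin{enumerate}
\item You give no argument that $\Cent{P}{E}=E$. Without it, $g^2$ need only lie in $\Cent{P}{E}$, not in $E$.
\item Even granting $\Cent{P}{E}=E$, the identity $(ge)^2=g^2[g,e]$ only shows that $gE$ contains an involution if and only if the class of $g^2$ in $\Cent{E}{g}/[E,g]\cong\F_2$ is trivial. You give no reason for this, and for an arbitrarily chosen transvection coset nothing in your set-up forces it.
\end{enumerate}
In that special case the class does vanish when $g$ lies over the central involution of $\mathsf{D}_8$. This follows from a short commutator computation with lifts of the other involutions of $\mathsf{D}_8$. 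Even so, you would still need the reduction to $\Cent{P}{E}=E$, and the proposal offers nothing for it.
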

\begin{proof}
Suppose that such a group $G$ exists, and set $C=\Cent{G}{E}$.
Then $E\le C$, and the involutions of $C$ are exactly the
nonidentity elements of $E$. Moreover,
\[
G/C\cong\Aut_G(E)\cong\GL_3(2)=\Aut(E).
\]
Since $\OddCore{G}=1$, all the hypotheses of
\cite[Theorem 1]{Alperin} are satisfied.
By \cite[Corollary 2]{Alperin}, there is an involution in
$G\setminus C$. This contradicts the assumption that every
involution of $G$ belongs to $E$.
\end{proof}

We now apply Lemmas~\ref{lem:linear} and \ref{lem:exclude}
to complete the abelian-socle case.

Recall that a finite $2$-group $P$ is a Suzuki $2$-group if it is
nonabelian, has more than one involution, and admits an automorphism
of odd order acting regularly on its involutions.

\begin{theorem}\label{thm:solvable}
Let $G\in \mathbf{OC}_2$. 
If $E:=\mathrm{soc}(G)$ is solvable and $\mathrm{O}_{2'}(G)=1$,
then $P\in \mathrm{Syl}_{2}(G)$ is homocyclic, generalized quaternion, or a Suzuki $2$-group. 
Conversely, every nontrivial group in
one of these three families occurs as a Sylow $2$-subgroup of a finite
group $G$ satisfying these hypotheses.
\end{theorem}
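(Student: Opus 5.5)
The plan is to show that $P$ centralizes $E$, so that Lemma~\ref{lem:fusion-gap} applies, and then invoke Li--Zhu \cite[Theorem~1.1]{LZ}. By Lemma~\ref{lem:monolithic}, $E\cong\mathsf{C}_2^n$, $\mathcal{I}(G)=E^\#$, and $G$ is transitive on $E^\#$. Set $C=\Cent{G}{E}$ and $H=\Aut_G(E)=G/C\le\GL(E)$; then $H$ is transitive on $E^\#$. If $n=1$, then $E\le \mathrm{Z}(G)$ and nothing needs to be done, so we may assume $n\ge2$.

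\textbf{Step 1: apply Lemma~\ref{lem:normalizer}.} We use Lemma~\ref{lem:normalizer} with $N=C$, but this needs a normal Sylow $2$-subgroup of $C$. To get one, let $A\in\Syl_2(C)$. Every involution of $C$ lies in $E\le \mathrm{Z}(C)$. Hence $C$ has a central subgroup $E$ containing all its involutions. The cleanest way to conclude would be to show directly that $C$ is $2$-nilpotent, so that $C=\OddCore{C}A$ with $\OddCore{C}\le\OddCore{G}=1$, giving $A\unlhd C$. I would try to prove this via Frobenius' normal $p$-complement theorem: a $2'$-element normalizing a $2$-subgroup $Q$ of $C$ centralizes $\Omega_1(Q)\le E$ and hence must centralize $Q$. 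This gives a fixed-point argument on $Q/\Phi$-type sections, though it needs care. Alternatively, one could replace $C$ by $\OddCore{C}A$ if that is easier. I expect this to be the main technical obstacle. Granting it, $A\unlhd G$ is a $2$-group with $\Omega_1(A)=E$, and all the hypotheses of Lemma~\ref{lem:normalizer} hold. So $H$ satisfies \eqref{eq:restriction}: an odd-order $X\le H$ with $\Cent{E}{X}=0$ has odd normalizer.

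\textbf{Step 2: linear restriction.} By Lemma~\ref{lem:linear}, either $|H|$ is odd, or $n=3$ and $H=\GL_3(2)$. The latter is impossible by Lemma~\ref{lem:exclude}. Hence $|G/C|$ is odd, so $P\le C$, that is, condition (iii) of Lemma~\ref{lem:fusion-gap} holds. Then $\Aut(P)$ is transitive on $\mathcal{I}(P)$, and by Li--Zhu (the Gross conjecture), $P$ is homocyclic, generalized quaternion, or Suzuki.

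\textbf{Step 3: converse.} We need explicit realizations with $\OddCore{G}=1$ and solvable socle.
\begin{itemize}
\item \emph{Homocyclic} $P=(\mathsf{C}_{2^k})^n$: take $G=P\rtimes Y$, where $Y$ is a Singer cycle of order $2^n-1$. Concretely, view $P$ as $(\mathbb{Z}/2^k)[\zeta]$, the Galois ring, and let $Y$ act by multiplication by Teichm\"uller units; this acts regularly on $\Omega_1(P)^\#$.
\item \emph{Generalized quaternion} $P$: take $\mathrm{SL}_2(q)$ with $q$ odd and $|{\mathrm{SL}_2(q)}|_2=|P|$, which has a unique involution. It is nonsolvable for $q\ge5$, but its socle is $\mathrm{Z}=\mathsf{C}_2$, which is solvable, and $\OddCore{\mathrm{SL}_2(q)}=1$. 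We choose $q$ with $(q^2-1)_2=|P|$, or $q=3$ for $\mathsf{Q}_8$.
\item \emph{Suzuki} $P$: by definition there is an odd-order automorphism $y$ regular on $\mathcal{I}(P)$, so take $G=P\rtimes\langle y\rangle$.
\end{itemize}
In each case every involution lies in the normal subgroup $\Omega_1(P)$ and the involutions form one orbit. Moreover, $\OddCore{G}$ centralizes the normal $2$-subgroup $P$, yet $y$ acts faithfully, so $\OddCore{G}=1$. Finally, the socle is $\Omega_1(\mathrm{Z}(P))$, which is solvable.
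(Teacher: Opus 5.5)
\textbf{There is a genuine gap in Step~1, and the route you sketch for it cannot work.} Since $\OddCore{C}\le\OddCore{G}=1$, proving that $C=\Cent{G}{E}$ is $2$-nilpotent is the same as proving that $C$ is a $2$-group, and that is false. Your Frobenius step breaks at ``hence must centralize $Q$''. For $p=2$, an odd-order automorphism of a $2$-group $Q$ that centralizes $\OmegaOne{Q}$ need not centralize $Q$; think of $\mathsf{Q}_8$ with an automorphism of order $3$.

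Here is a group satisfying all the hypotheses with $n=2$. Let $G=U\rtimes T$ be a Borel subgroup of $\mathrm{SU}_3(4)=\PSU_3(4)$, where $|U|=64$ and $T=\{\mathrm{diag}(\lambda,\lambda^{3},\lambda^{-4}):\lambda\in\F_{16}^\times\}\cong\mathsf{C}_{15}$.
\begin{itemize}
\item All involutions of $G$ lie in $E=\mathrm{Z}(U)\cong(\F_4,+)$.
\item $T$ acts on $E$ through $\lambda\mapsto\lambda^{5}$, hence transitively on $E^\#$, and on $U/E$ through $\lambda\mapsto\lambda^{-2}$.
\item $\Cent{G}{U}=E$.
\end{itemize}
So $G\in\mathbf{OC}_2$, $\OddCore{G}=1$ and $\soc(G)=E$. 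But the subgroup of order $5$ in $T$ centralizes $E=\OmegaOne{U}$ and acts nontrivially on $U/E$. Hence $C=U\rtimes\mathsf{C}_5$ is not $2$-nilpotent. Your fallback of replacing $C$ by $\OddCore{C}A$ just gives $A$, which need not be normal in $G$. So the proposal has no valid way to produce the normal Sylow $2$-subgroup that Lemma~\ref{lem:normalizer} requires.

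\textbf{The fix is to avoid needing any structure of $C$.} Take $S\in\Syl_2(C)$ and $N=\Norm{G}{S}$.
\begin{itemize}
\item The Frattini argument gives $G=CN$, so $N/\Cent{N}{E}\cong G/C$ as subgroups of $\GL(E)$.
\item The group $D=\Cent{N}{E}=\Norm{C}{S}$ has $S$ as a normal Sylow $2$-subgroup.
\item $\OmegaOne{S}=E$, because $E\le \mathrm{O}_2(C)\le S$ and $E$ contains every involution of $G$; likewise $\mathcal{I}(N)\subseteq E$.
\end{itemize}
Thus Lemma~\ref{lem:normalizer} applies to the pair $(N,D)$. Your Step~2 then goes through with $N/D$ in place of $G/C$; this is exactly how the paper proceeds.

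The rest of your argument is fine: the case $n=1$, Step~2, and the converse constructions. The Galois-ring model and $\mathrm{SL}_2(q)$ are valid realizations, though the paper simply takes $G=P$ for generalized quaternion $P$ and lifts a Singer cycle via Schur--Zassenhaus in the homocyclic case.
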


  \begin{proof}
By Lemma~\ref{lem:monolithic}, $E = \mathrm{soc}(G) = \mathcal{I}(G) \cup \{1\} \cong \mathsf{C}_2^n$. If $n = 1$, then every Sylow $2$-subgroup of $G$ has a unique involution and is therefore cyclic or generalized quaternion. 
 We may thus assume that $n\geq2$.

Let $P \in \mathrm{Syl}_2(G)$, and set $C = \mathrm{C}_G(E)$, $S = P \cap C$, $N = \mathrm{N}_G(S)$.
Since $C \unlhd G$, we have $S \in \mathrm{Syl}_2(C)$, $S \unlhd P$, and $P \leq N$. 
The Frattini argument then gives $G = CN$.
 Moreover, $E \leq S$, and $C$ acts trivially on $E$. 
 Hence $N$ acts transitively on $E^\#$. Since every involution of $G$ lies in $E^\#$, it follows that $\mathcal{I}(N)=\mathcal{I}(S) = E^\#$ and $\Omega_1(S) = E$.

Set $D=N\cap C=\Cent{N}{E}$ and $\mathcal{H}=N/D$. Then
$\mathcal{H}\leq\GL(E)$ acts transitively on $E^\#$. We verify
condition~\eqref{eq:restriction} of Lemma~\ref{lem:linear}. Let
$X\leq\mathcal{H}$ have odd order and satisfy $\Cent{E}{X}=1$. The group
$D$ has the normal Sylow $2$-subgroup $S$, with $\Omega_1(S)=E$;
moreover, $D=\Cent{N}{E}$ and $\mathcal{I}(N)\subseteq E$.
Lemma~\ref{lem:normalizer}, applied to the pair $(N,D)$, therefore shows
that $\Norm{\mathcal{H}}{X}$ has odd order.

Lemma~\ref{lem:linear} now implies that either $\mathcal{H}$ has odd order, or $n = 3$ and $\mathcal{H} \cong \mathrm{GL}_3(2)$. 
In the latter case,
\[
\Aut_G(E)=G/C\cong N/D=\mathcal{H}\cong\GL_3(2),
\]
contrary to Lemma~\ref{lem:exclude}.
Thus $|G : C|=|N:D|=|\mathcal{H}|$ is odd. 
Since $$P/S = P/(P \cap C) \cong PC/C \leq G/C$$ 
is a $2$-group, we have $P = S$.
Consequently, $N = \mathrm{N}_G(P)$ acts transitively on $\mathcal{I}(P) = E^\#$. 
By Lemma \ref{lem:fusion-gap},
$\mathrm{Aut}(P)$ is transitive on the involutions of $P$.
It follows from
\cite[Theorem~1.1]{LZ} that $P$ is homocyclic, generalized quaternion,
or a Suzuki $2$-group.
Since $n \geq 2$, the group $P$ has at least three involutions and hence is not generalized quaternion. 
Together with the case $n=1$, this proves the forward implication.

\medskip

For the converse, if $P$ is generalized quaternion, we simply take $G=P$. 
Now
suppose that $P$ is homocyclic or a Suzuki $2$-group, and set
$E=\Omega_1(P)$. Then
$E=\mathcal{I}(P)\cup\{1\}$ is an elementary abelian 2-group. 
In the Suzuki case,
this follows because $\mathrm{Z}(P)$ contains an involution and is
characteristic in $P$, so transitivity forces every involution of $P$ to
be central.

We claim that $\Aut(P)$ contains an odd-order cyclic subgroup $B$ acting
transitively on $E^\#$. Suppose first that
$P\cong(\mathsf{C}_{2^a})^n$. 
Restriction of automorphisms yields an epimorphism
\[
\Aut(P)\longrightarrow \mathrm{Aut}(\Omega_1(P))
\]
whose kernel is a $2$-group. The preimage of a cyclic subgroup of
$ \mathrm{Aut}(\Omega_1(P))\cong\GL_n(2)$ of order $2^n-1$ therefore has, by the Schur--Zassenhaus
theorem, a cyclic complement $B$ of order $2^n-1$. Its induced action on
$E^\#$ is regular. If $P$ is a Suzuki $2$-group, take the odd-order
cyclic subgroup $B\leq\Aut(P)$ provided by the definition.

Set $G=P\rtimes B$. Since $G/P$ has odd order, every involution of $G$
lies in $P$. Thus $\mathcal{I}(G)=E^\#$, which is a single
$G$-conjugacy class.

It remains to identify the socle and the odd core. Let
$1\neq M\unlhd G$. If $M\cap P\neq1$, then the $2$-group $M\cap P$
contains an involution. Hence $M\cap E\neq1$, and the transitivity of $B$
on $E^\#$ gives $E\leq M$. If $M\cap P=1$, then
$[M,P]\leq M\cap P=1$, so $M\leq\Cent{G}{P}$. Since $B$ has odd order
and $\Inn(P)$ is a $2$-group, no nonidentity element of $B$ induces an
inner automorphism of $P$. Therefore $\Cent{G}{P}=\mathrm{Z}(P)\leq P$,
contrary to $M\cap P=1$. Thus every nontrivial normal subgroup of $G$
contains $E$. It follows that $E$ is the unique minimal normal subgroup,
$\soc(G)=E$, and $\mathrm{O}_{2'}(G)=1$. This completes the proof.
\end{proof}

\section{The almost simple case}

We now consider almost simple groups $A$ with socle $S$. We first
determine which simple groups have a single involution class and show
that an extension of odd index belongs to $\mathbf{OC}_2$ precisely
when its socle does. We then treat extensions of even index and
determine the Sylow $2$-subgroups that arise.

For a simple group $S$ of Lie type, $\Inndiag(S)$ denotes the
subgroup generated by the inner and diagonal automorphisms.
By \cite[Theorem~2.5.12]{GLS},
\begin{equation}\label{eq:aut_1}
   \mathrm{Aut}(S)=\Inndiag(S) \rtimes \Phi \Gamma,
\end{equation}
where $\Phi$ induces field automorphisms and $\Gamma$ induces 
graph automorphisms,
with the conventions of \cite[\S 2.5]{GLS}.

In the notation $\mathrm{GL}_n^{\epsilon}(q)$,
$\mathrm{SL}_n^{\epsilon}(q)$, and $\mathrm{PSL}_n^{\epsilon}(q)$,
the sign $\epsilon=+$ denotes the corresponding linear group, and
$\epsilon=-$ denotes $\mathrm{GU}_n(q)$, $\mathrm{SU}_n(q)$, and
$\mathrm{PSU}_n(q)$, respectively.

\begin{theorem}[Yamaki]\label{thm:yamaki}
If a nonabelian finite simple group $T$ has at least two conjugacy classes of involutions, then its involutions do not form a single orbit under $\Aut(T)$.
\end{theorem}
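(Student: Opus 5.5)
This is a result imported from the literature (Yamaki's theorem), so the natural course is to cite it. If a justification inside the paper were wanted, I would verify it by running through the classification of finite simple groups. The statement is equivalent to the following: if $\Aut(T)$ fuses all involutions of $T$, then $T$ already has only one involution class. By \eqref{eq:aut_1}, $\Aut(T)$ acts on the set of $T$-classes of involutions. So the task is to show that whenever this set has size at least $2$, the action has at least two orbits. The uniform tool is to find an $\Aut(T)$-invariant that separates classes. The useful invariants are centralizer orders $|\Cent{T}{t}|$, the isomorphism type of $\Cent{T}{t}$, and the conjugacy class of $t$ in $\Inndiag(T)$, which field and graph automorphisms permute but do not mix across Jordan or ATLAS types.

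The cases would go as follows.

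\textbf{Alternating groups.} For $\mathsf{A}_n$ with $n\ge 8$ (and $n=6$ handled separately), involution classes are determined by the number of transpositions in the cycle type. Outer automorphisms for $n\ne 6$ come from $\mathsf{S}_n$ and preserve cycle type, so distinct classes stay distinct. For $\mathsf{A}_6$ there is only one class.

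\textbf{Sporadic groups.} The $\mathbb{ATLAS}$ \cite{Atlas} gives the result directly: $|\Out|\le 2$, and distinct involution classes such as 2A and 2B have different centralizer orders.

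\textbf{Odd characteristic, Lie type.} Involutions are semisimple, and their $\Inndiag(T)$-classes are parametrized by the eigenspace dimensions of $-1$, or the analogous data for exceptional groups, as in \cite[Table~4.5.1]{GLS}. Field automorphisms preserve these parameters. Graph automorphisms, such as the inverse-transpose map in $\mathrm{PSL}_n$ and triality in $D_4$, either also preserve them or identify classes with equal centralizer orders. The delicate point is whether two $T$-classes lying inside one $\Inndiag$-class, or swapped by a graph automorphism, really remain in different $\Aut(T)$-orbits. Here I would compare $|\Cent{T}{t}|$ and the component types of $\Cent{T}{t}$.

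\textbf{Even characteristic, Lie type.} Involutions are unipotent, and the Aschbacher--Seitz classification (classes $a_k$, $b_k$, $c_k$) gives classes with distinct centralizer orders. The exception is the graph automorphism in $\Sp_4(2^f)$, $F_4(2^f)$ and $G_2(3^f)$-type situations, which can swap classes of equal centralizer order. \emph{This is the main obstacle:} these are exactly the cases where the statement could fail (for example, $a_1$ and $a_1'$ in $\Sp_4(q)$ with $q$ even are swapped by the graph automorphism). I would check them by hand. In $\Sp_4(2^f)$ there are three involution classes, $a_1$, $b_1$ and $c_2$, and the graph automorphism swaps $a_1$ with $b_1$ but fixes $c_2$, so there are still two orbits. $F_4(2^f)$ is handled the same way, via its class $\tilde A_1 A_1$-type involution.

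In practice I would simply state that this is \cite{Yamaki}, i.e.\ Yamaki's theorem, proved by this case-by-case CFSG check.
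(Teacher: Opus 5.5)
Your proposal takes the same route as the paper, which also proves this statement only by citation (\cite[Lemma~2]{Yamaki}, see also \cite[Lemma~3.4]{CJ}), so the citation alone suffices. The CFSG sketch you add is not needed and should not be offered as a proof, because it contains slips. In $\Sp_4(2^f)$ the involution classes are $b_1$, $a_2$, $c_2$, with the graph automorphism swapping $b_1$ and $a_2$ and fixing $c_2$. $G_2(3^f)$ has odd characteristic and a single involution class. Your odd-characteristic ``delicate point'' is misstated, since $T$-classes inside one $\Inndiag(T)$-class are automatically fused in $\Aut(T)$; what must be shown is that some two classes lie in different orbits.
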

\begin{proof}
  This is \cite[Lemma~2]{Yamaki}, see also \cite[Lemma~3.4]{CJ}.
\end{proof}

Applying Theorem \ref{thm:yamaki}, we have the following lemma.

\begin{lemma}
\label{lem:socle-inheritance}
Let $A$ be an almost simple group with socle $S$.
If $A\in \mathbf{OC}_2$, then 
$S\in \mathbf{OC}_2$.
\end{lemma}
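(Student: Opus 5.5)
We argue by contraposition, combining Lemma~\ref{lem:monolithic} with Theorem~\ref{thm:yamaki}. Suppose $A\in\mathbf{OC}_2$ with socle $S$. First we observe that $\OddCore{A}=1$: a normal subgroup of odd order intersects $S$ trivially, so it centralizes $S$. Since $\Cent{A}{S}=1$ for an almost simple group, it is trivial. Lemma~\ref{lem:monolithic}(ii) then gives $\mathcal{I}(A)=\mathcal{I}(S)$. Alternatively, we can see this directly: $S$ has even order by the Feit--Thompson theorem, so it contains an involution, and every $A$-conjugate of an involution of $S$ lies in the normal subgroup $S$. In either case, every involution of $A$ lies in $S$, and $\mathcal{I}(S)$ is a single $A$-conjugacy class.

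Next we translate $A$-conjugacy into an $\Aut(S)$-orbit. Since $\Cent{A}{S}=1$, conjugation embeds $A$ into $\Aut(S)$, and this embedding extends the natural identification of $S$ with $\Inn(S)$. Hence the $A$-orbits on $\mathcal{I}(S)$ are contained in the $\Aut(S)$-orbits. Because $\mathcal{I}(S)$ is a single $A$-orbit, the involutions of $S$ form a single $\Aut(S)$-orbit.

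Finally, Theorem~\ref{thm:yamaki} states that a nonabelian simple group with at least two involution classes cannot have its involutions forming a single $\Aut$-orbit. Applied to $T=S$, it shows that $S$ has exactly one conjugacy class of involutions. This class is nonempty, so $S\in\mathbf{OC}_2$. All the substantive content lies in Yamaki's result. The only point needing care is the identification of $A$ with a subgroup of $\Aut(S)$, so that $A$-orbits are unions of $S$-classes contained in $\Aut(S)$-orbits; this is routine.
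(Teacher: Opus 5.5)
Your proposal is correct and follows the paper's proof. Both arguments obtain $\mathcal{I}(A)=\mathcal{I}(S)$ from Lemma~\ref{lem:monolithic}(ii), pass from $A$-transitivity to $\Aut(S)$-transitivity on $\mathcal{I}(S)$, and conclude with Theorem~\ref{thm:yamaki}; your explicit check that $\OddCore{A}=1$, which that lemma requires, is a detail the paper leaves implicit.
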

\begin{proof}
By Lemma \ref{lem:monolithic}(ii), $\mathcal{I}(A)=\mathcal{I}(S)$. Since $A\in\mathbf{OC}_2$, the involutions of
$S$ form one orbit under $A$, and hence under $\Aut(S)$. Theorem 4.1 implies
that $S\in\mathbf{OC}_2$. 
\end{proof}

\begin{lemma}\label{lem:simple-list}
Let $A$ be an almost simple group with socle $S$ and suppose that
$|A:S|$ is odd. Then the following are equivalent:
\begin{enumerate}[label=\textup{(\roman*)}]
\item $A\in\mathbf{OC}_2$;
\item $S\in\mathbf{OC}_2$;
\item $S$ occurs in Table~\ref{tab:simple-list}.
\end{enumerate}
\end{lemma}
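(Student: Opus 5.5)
The plan is to prove (i)$\Rightarrow$(ii), (ii)$\Rightarrow$(i), and (ii)$\Leftrightarrow$(iii) separately; only the last uses the classification.

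\emph{(i)$\Rightarrow$(ii).} This is Lemma~\ref{lem:socle-inheritance}; it does not even need $|A:S|$ odd.

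\emph{(ii)$\Rightarrow$(i).} Since $|A:S|$ is odd, $A/S$ has odd order and contains no involutions. Hence every involution of $A$ lies in $S$, so $\mathcal{I}(A)=\mathcal{I}(S)$. The set $\mathcal{I}(S)$ is nonempty, because $S$ is non-abelian simple and so has even order by Feit--Thompson. By hypothesis it is a single $S$-class, hence a single $A$-class. Thus $A\in\mathbf{OC}_2$.

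\emph{(ii)$\Leftrightarrow$(iii).} This is the substantive part, and I would run through the CFSG families, using the standard descriptions of involution classes.
\begin{itemize}
\item \emph{Alternating groups.} $\mathsf{A}_n$ with $n\ge 8$ contains both products of two and of four disjoint transpositions, so it has at least two involution classes. Only $\mathsf{A}_5\cong\mathrm{PSL}_2(4)\cong\mathrm{PSL}_2(5)$, $\mathsf{A}_6$ and $\mathsf{A}_7$ survive.
\item \emph{Sporadic groups.} I would read off the ATLAS \cite{Atlas} the groups with exactly one class $2A$.
\item \emph{Groups of Lie type in characteristic $2$.} Use the Aschbacher--Seitz description of unipotent involution classes. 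A single class occurs essentially only when the unipotent radical structure forces all involutions to be root elements: $\mathrm{PSL}_2(q)$, $\mathrm{PSL}_3(q)$, $\PSU_3(q)$, ${}^2B_2(q)$, and possibly small cases. Larger rank yields at least two classes, for example transvections versus products of two commuting transvections.
\item \emph{Groups of Lie type in odd characteristic.} Involutions are semisimple and their classes correspond to centralizer types, as in \cite[Table~4.5.1]{GLS}. In rank at least $3$ there are several types, for instance in $\mathrm{PSL}_n(q)$ involutions with eigenvalue $-1$ of multiplicity $2$ versus multiplicity $4$. The survivors are $\mathrm{PSL}_2(q)$, $\mathrm{PSL}_3(q)$, $\PSU_3(q)$, $G_2(q)$, ${}^2G_2(q)$, ${}^3D_4(q)$, and a few others; for these I would check the class count case by case, including congruence conditions on $q$ modulo $4$ where relevant.
\end{itemize}
The resulting list should coincide with Table~\ref{tab:simple-list}. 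Conversely, for each entry of the table one verifies directly, from the same sources, that there is a single class.

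The main obstacle is this exhaustive Lie-type bookkeeping. In particular, the small-rank exceptional groups and the isomorphisms between small groups need care, so that no group is missed or double counted relative to the table. To control this, I would first try to cite a known classification of simple groups with one involution class rather than rederive it. Yamaki's work and \cite{CJ} essentially contain such a list, and I would then only check agreement with the table.
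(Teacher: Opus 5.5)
Your proposal follows the paper's proof: (ii)$\Rightarrow$(i) because $\mathcal{I}(A)=\mathcal{I}(S)$ when the index is odd, (i)$\Rightarrow$(ii) by Lemma~\ref{lem:socle-inheritance}, and (ii)$\Leftrightarrow$(iii) by citing the known list of simple groups with one involution class, which the paper takes from \cite[Proposition~3.1]{CJ}. Do not rely on your heuristic sketch for that last step, however: your claim that Lie rank at least $3$ forces several involution classes is false for $\mathrm{PSL}_4(q)$ with $q\equiv5\pmod 8$, where your multiplicity-$2$ versus multiplicity-$4$ example degenerates because $-I$ is central, and this group does appear in Table~\ref{tab:simple-list}.
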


\begin{table}[ht]
\centering
\caption{Nonabelian simple groups with one involution class}
\label{tab:simple-list}
\renewcommand{\arraystretch}{1.17}
\begin{tabularx}{\textwidth}{@{}>{\bfseries}l X@{}}
\toprule
Family & Simple groups $S$ \\
\midrule
Classical
& $\mathrm{PSL}_2(q)$, $q\geq4$;
  $\mathrm{PSL}_3(q)$, $q\geq2$;
  $\PSU_3(q)$, $q\geq3$;
  $\mathrm{PSL}_4(q)$, $q\equiv5\pmod 8$;
  $\PSU_4(q)$, $q\equiv3\pmod 8$. \\
Exceptional
& ${}^{2}B_2(2^{2a+1})$, $a\geq1$;
  ${}^{2}G_2(3^{2a+1})$, $a\geq1$;
  $G_2(q)$, $q$ odd;
  ${}^{3}D_4(q)$, $q$ odd. \\
Alternating
& $\mathsf{A}_n$, $5\leq n\leq7$. \\
Sporadic
& $M_{11},M_{22},M_{23},J_1,J_3,McL,O'N,
   Ly,Th$. \\
\bottomrule
\end{tabularx}
\end{table}
\begin{proof}
Since $|A : S|$ is odd, we have $\mathcal{I}(A) = \mathcal{I}(S)$. 
Thus (ii) implies (i).
Applying Lemma \ref{lem:socle-inheritance}, 
we have that (i) implies (ii).
Finally, an application of \cite[Proposition~3.1]{CJ} shows that (ii) and (iii) are equivalent.
\end{proof}

\begin{remark}\label{rem:accidental}
The table lists familiar family names rather than disjoint isomorphism
types.  In particular
\[
 \mathsf{A}_5\cong\mathrm{PSL}_2(4)\cong\mathrm{PSL}_2(5),\qquad
 \mathsf{A}_6\cong\mathrm{PSL}_2(9),\qquad
 \mathrm{PSL}_3(2)\cong\mathrm{PSL}_2(7).
\]
Also ${}^{2}G_2(3)'\cong\mathrm{PSL}_2(8)$ and
$G_2(2)'\cong\PSU_3(3)$.  Thus a nonredundant alternating contribution
adds only $\mathsf{A}_7$.
\end{remark}

Lemma~\ref{lem:simple-list} settles the odd-index case.
Suppose now that $A\in\mathbf{OC}_2$ and $|A:S|$ is even.
By Lemmas~\ref{lem:monolithic} and \ref{lem:socle-inheritance},
every involution of $A$ lies in $S$, and $S$ occurs in
Table~\ref{tab:simple-list}. We use these restrictions to reduce
the even-index case to type $A_1$ in odd characteristic.

\begin{proposition}
\label{prop:even-index-reduction}
  Let $A$ be an almost simple group with socle $S$.
  Suppose that $A\in \mathbf{OC}_2$.
  If $|A:S|$ is even,
  then $S\cong \mathrm{PSL}_{2}(p^{f})$ for an odd prime $p$ and an even integer
$f$.
\end{proposition}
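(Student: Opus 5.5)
We use two constraints. By Lemmas~\ref{lem:monolithic} and~\ref{lem:socle-inheritance}, $S$ appears in Table~\ref{tab:simple-list} and $\mathcal{I}(A)=\mathcal{I}(S)$. So $A\setminus S$ contains no involutions. Since $|A:S|$ is even, $A/S\le\Out(S)$ contains an involution $\bar a$. Let $A_0$ be the preimage of $\langle\bar a\rangle$; it is an extension of $S$ of index $2$ in which every element of the nontrivial coset has order divisible by $4$. The plan is to show that for every $S$ in the table except $\mathrm{PSL}_2(p^f)$ with $p$ odd and $f$ even, each involution of $\Out(S)$ lifts to an involutory automorphism of $S$ outside $\Inn(S)$. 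Such a lift $\alpha$ gives an involution of $A$ not in $S$ (if $\alpha$ has order $2$ in $\Aut(S)$, the element of $A_0$ inducing it has order $2$, since $\Cent{A}{S}=1$), which is a contradiction.

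We go through Table~\ref{tab:simple-list} family by family, using~\eqref{eq:aut_1}. First, groups with $\Out(S)$ of odd order are immediately excluded. These are ${}^2B_2(2^{2a+1})$, ${}^2G_2(3^{2a+1})$, $M_{11}$, $M_{23}$, $J_1$, $Ly$, $Th$, $G_2(q)$ and ${}^3D_4(q)$ with $q$ odd and $f$ odd, and $\mathrm{PSL}_2(q)$, $\mathrm{PSL}_3(q)$, $\PSU_3(q)$ whenever $\Out$ happens to be odd. Second, for the sporadic groups $M_{22},J_3,McL,O'N$ and for $\mathsf{A}_5$ and $\mathsf{A}_7$ (and $\mathsf{A}_6$ insofar as it is not covered by $\mathrm{PSL}_2(9)$), we use the $\mathbb{ATLAS}$: each $S.2$ contains outer involutions (classes $2B$, or transpositions in $\mathsf{S}_n$). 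For $\mathsf{A}_6$ there are three subgroups of index $2$ in $\Aut(\mathsf{A}_6)$; $\mathsf{S}_6$ and $\PGL_2(9)$ contain outer involutions and only $M_{10}$ does not, consistent with $\mathrm{PSL}_2(9)$, $f=2$.

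Third, for the Lie type groups with an even part of $\Out(S)$, we use standard lifts.
\begin{itemize}
\item \emph{Graph automorphisms.} For $\mathrm{PSL}_3(q)$, $\mathrm{PSL}_4(q)$ and $\PSU_4(q)$, the inverse-transpose graph automorphism is an involution, and so is the graph automorphism of $\PSU_3(q)$, which is a field automorphism of $\mathrm{GU}$. For $q$ even there is an analogous involutory graph automorphism.
\item \emph{Field automorphisms.} Every field automorphism of order $2$, or a suitable product of a field and a graph automorphism, is an involution. This covers $G_2(q)$ and ${}^3D_4(q)$ with $f$ even, and $\mathrm{PSL}_2(2^f)$.
\item \emph{Diagonal automorphisms of even order.} For $\mathrm{PSL}_2(q)$ with $q$ odd, $\PGL_2(q)\setminus\mathrm{PSL}_2(q)$ contains involutions, for example images of $\mathrm{diag}(1,-1)$ or elements of order $2$ in a nonsplit torus. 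For $\mathrm{PSL}_4(q)$ with $q\equiv5\pmod 8$ and $\PSU_4(q)$ with $q\equiv3\pmod 8$, $\Inndiag/\Inn$ is cyclic of order $4$. Its involution is realised by $\mathrm{diag}(\zeta,\zeta,1,1)$-type elements whose image is an involution in $\PGL_4^{\epsilon}(q)$: take $\mathrm{diag}(-1,-1,1,1)$, or $\mathrm{diag}(-1,1,1,1)$ of determinant $-1$, a nonsquare class.
\end{itemize}
Since $\Out(S)$ for these groups is generated by diagonal, field and graph parts, we must also handle arbitrary involutions of $\Out(S)$, not just generators. Every involution of $\Out(S)$ is diagonal, field, or graph, possibly times a diagonal element. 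For the mixed ones we would use Lang's theorem: an involution of the form $\delta\phi$ with $\phi$ a field or graph involution is $\Inndiag$-conjugate to $\phi$ times an inner element. So its coset contains a conjugate of $\phi$ whenever $\Inndiag/\Inn$ contributes trivially after twisting. This applies except in type $A_1$ with $p$ odd and $f$ even, where $\delta\phi$ gives the exceptional $M_{10}$-type coset.

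The main obstacle is this last point. We must show rigorously, for $\mathrm{PSL}_4^{\epsilon}(q)$ and $\mathrm{PSL}_3^\epsilon(q)$ (where $3\mid|\Inndiag/\Inn|$ contributes no $2$-part), that every coset of $S$ in $\Aut(S)$ mapping to an involution of $\Out(S)$ contains an involution. The $4$-part of $\Inndiag/\Inn$ is the delicate case for $\mathrm{PSL}_4^\epsilon$, where $\Out$ has a dihedral $2$-part. There, I would list the involutions of $\Out(S)_2\cong\mathsf{D}_8$ (for $f$ odd, since $q\equiv5\pmod 8$ or $3\pmod 8$ forces $f$ odd). These are the central diagonal involution, the graph involution $\gamma$, and $\gamma\delta$. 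I would exhibit explicit involutory representatives of each, using the symplectic and orthogonal forms preserved by graph-type involutions, whose centralisers are $\mathrm{PSp}_4(q)$ and $\mathrm{PSO}_4(q)$ types. In $\mathrm{PSL}_2(p^f)$ with $f$ even, the diagonal-field coset has no involution, so that case genuinely survives, as claimed.
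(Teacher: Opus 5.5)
Your strategy is the paper's: for every socle in Table~\ref{tab:simple-list} other than $\mathrm{PSL}_2(p^f)$ with $p$ odd and $f$ even, show that every $S$-coset representing an involution of $\Out(S)$ contains an involution, which then lies in $A\setminus S$. The sporadic, alternating and $\mathrm{PSL}_2$ parts are fine. The Lie-type part, however, is not established as written.

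First, $G_2(3^f)$ with $f$ odd does not have $\Out(S)$ of odd order. In characteristic $3$ the exceptional graph automorphism $\tau$ has $\tau^2$ equal to the Frobenius, so $\Out(S)\cong\mathsf{C}_{2f}$. None of your bullets covers this case. It is easily repaired, since $\tau^f$ is an involutory outer automorphism, but it shows the enumeration is unreliable.

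Second, you handle mixed involutions by appealing to Lang's theorem, with the unexplained proviso ``whenever $\Inndiag/\Inn$ contributes trivially after twisting''. That is not an argument. When $G=\Inndiag(S)$ has odd index over $S$, what is needed is elementary, and it is exactly the paper's argument:
\begin{itemize}
\item By \cite[Theorem~2.5.12]{GLS}, $\Aut(S)=G\rtimes\Phi\Gamma$.
\item If $xS\in A/S$ is an involution, then $\langle x\rangle G=G\rtimes\langle y\rangle$ with $y\in\Phi\Gamma$ an involution.
\item Since $G/S$ has odd order, Schur--Zassenhaus in $\langle x\rangle G/S$ gives $g\in G$ with $y^g\in xS\subseteq A\setminus S$.
\end{itemize}
This disposes of $\mathrm{PSL}_3(q)$, $\PSU_3(q)$, $G_2(q)$ (including $q=3^f$), ${}^3D_4(q)$, ${}^2B_2$, ${}^2G_2$ and $\mathrm{PSL}_2(2^f)$ at once, without deciding which outer automorphism groups are odd.

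The decisive case is $\mathrm{PSL}_4^{\epsilon}(q)$ with $4\parallel q-\epsilon1$. Here $\Inndiag(S)/S\cong\mathsf{C}_4$, Schur--Zassenhaus is unavailable, and $\gamma\delta$ is not conjugate to $\gamma$ in $\Out(S)$. You leave this case as something you ``would'' do. The paper closes it by citing the splitting of $\Aut(S)$ over $S$ from \cite{LMM}, and uses the same reference for $\mathrm{PSL}_2(p^f)$ with $f$ odd.

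Your explicit route can be made to work, but not with the representatives you wrote:
\begin{itemize}
\item $\mathrm{diag}(-1,-1,1,1)$ has determinant $1$, so it is inner.
\item $-1$ is not a nonsquare here. It is a square that is not a fourth power, in $\mathbb{F}_q^\times$ and in the norm-one group of order $q+1$ respectively. That is precisely why $\mathrm{diag}(-1,1,1,1)$ represents the central involution $\delta^2$.
\end{itemize}
For the two noncentral classes, writing $x^{(q)}$ for the entrywise $q$-th power:
\begin{itemize}
\item In the linear case, take $x\mapsto Jx^{-T}J^{-1}$ with $J$ alternating (class of $\gamma$), or with $J$ symmetric of nonsquare determinant (class of $\gamma\delta$).
\item In the unitary case, take $x\mapsto gx^{(q)}g^{-1}$ with $g=\mathrm{diag}(\lambda,1,1,1)$, where $\lambda=1$ or $\lambda$ generates the norm-one group.
\end{itemize}
Each is an involution because $JJ^{-T}$, respectively $gg^{(q)}$, is scalar. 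Carried out this way, your proof would be more self-contained than the paper's, which relies on the splitting theorems of \cite{LMM}. As it stands, though, the key step is missing.
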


\begin{proof}
By Lemmas~\ref{lem:socle-inheritance} and \ref{lem:simple-list}, the group $S$ occurs in
Table~\ref{tab:simple-list}. Suppose first that
$S\cong\mathsf{A}_n$ with $n\geq7$. Since $|A:S|$ is even, we have
$A\cong\mathsf{S}_n$, which contains involutions outside
$\mathsf{A}_n$. This contradicts Lemma~\ref{lem:monolithic}(ii), so no
such alternating group occurs.

For each sporadic group $S$ in Table~\ref{tab:simple-list}, the outer
automorphism group has order at most $2$. When it has order $2$, 
the involution of $\mathrm{Out}(S)$ lifts to an involution
of $\mathrm{Aut}(S)$, as recorded in the $\mathbb{ATLAS}$
\cite{Atlas}. Lemma~\ref{lem:monolithic}(ii) therefore excludes every
sporadic group in the table.

Since $\mathsf{A}_5\cong\mathrm{PSL}_2(5)$ and
$\mathsf{A}_6\cong\mathrm{PSL}_2(9)$, it remains to consider the groups
of Lie type in Table~\ref{tab:simple-list}. By
\cite[Theorem~2.5.12]{GLS},
\begin{equation}\label{eq:aut}
  \Aut(S)=G\rtimes\langle\phi\rangle\Gamma,
\end{equation}
where $G:=\Inndiag(S)$, $\phi$ is a field automorphism and $\Gamma$ is generated by graph
automorphisms.

First suppose that $S=\mathrm{PSL}_4^{\epsilon}( q)$, where
$\epsilon\in\{\pm\}$ and $4\parallel q-\epsilon1$. By
\cite[Theorems~1.12 and~2.9]{LMM}, the group $\Aut(S)$ splits over $S$,
and hence so does $A$. Since $2\mid|A:S|$, the group $A$ contains an
involution outside $S$, contrary to Lemma~\ref{lem:monolithic}(ii).

Now suppose that $|G:S|$ is odd. 
Choose an involution
$xS\in A/S$. 
Note that 
$$\operatorname{gcd}(|\langle x\rangle G/G|,|G/S|)=\operatorname{gcd}(2,|G/S|)=1,$$
and so an application of the Schur-Zassenhaus theorem to $\langle x\rangle G/S$ forces $\langle x\rangle G/S=G/S  \rtimes \langle xS\rangle$.
As $x\notin G$, by \eqref{eq:aut}, 
$\langle x\rangle G$ splits over $G$,
so that there exists an involution $y\in \langle x\rangle G$ 
such that $y^{g}\in y^gS=xS \subseteq A$ for some $g\in G$.
Thus $A$ again contains an involution
outside $S$, contradicting Lemma~\ref{lem:monolithic}(ii). Consequently,
$|G:S|$ is even. 
Inspection of the remaining groups of Table \ref{tab:simple-list} in the
$\mathbb{ATLAS}$ \cite{Atlas} yields
\[
S\cong\mathrm{PSL}_2(p^f)
\]
with $p$ odd and $p^f>4$. Finally, \cite[Theorem~1.12]{LMM} shows that
$\Aut(S)$ splits over $S$ when $f$ is odd. The same argument would then
produce an involution in $A\setminus S$. Hence $f$ is even.
\end{proof}

Proposition~\ref{prop:even-index-reduction} leaves the case
$\mathrm{PSL}_2(p^{2a})$ with $p$ odd.
Before classifying the possible extensions, we define the mixed
diagonal--field extension and determine its Sylow $2$-subgroups.
Let $S=\mathrm{PSL}_2(q)$, where
$q=q_0^2$ and $q_0=p^a$ for an odd prime $p$. Let $\phi$ be the field
automorphism induced by $t\mapsto t^p$ on $\mathbb{F}_q$, and set
$\sigma=\phi^a$. 
Thus $\sigma$ is the involutory field automorphism
induced by $t\mapsto t^{q_0}$.

Let $\pi:\Aut(S)\to\Out(S)$ be the natural quotient map, and identify
$\mathrm{PGL}_2(q)$ with $\Inndiag(S)$. Choose a nonsquare
$\eta\in\mathbb{F}_q^\times$, set $d=\mathrm{diag}(\eta,1)$, and let
$\delta$ be the diagonal automorphism induced by $d$.
Writing $Z=\mathrm{Z}(\mathrm{SL}_2(q))$, we have
$(hZ)^\delta=d^{-1}hd\,Z$ for every $h\in\mathrm{SL}_2(q)$. 
Write $\overline\delta=\pi(\delta)$ and
$\overline\sigma=\pi(\sigma)$. Following \cite[p.~42]{KLS}, we define
\[
\mathrm{PGL}_2^*(p^{2a})
=\pi^{-1}\bigl(\langle\overline{\delta\sigma}\rangle\bigr).
\]

We next determine its Sylow $2$-subgroups. Set $2^b=(q-1)_2$. Since
$q=q_0^2$ with $q_0$ odd, $q\equiv1\pmod8$ and hence
$(q+1)_2=2$. Therefore
\[
|\mathrm{PSL}_2(q)|_2
=\left(\frac{q(q-1)(q+1)}{2}\right)_2
=(q-1)_2=2^b,
\]
so $b\geq3$. Set $Y=\mathrm{PGL}_2^*(p^{2a})$. 
Since $\overline{\delta\sigma}$ has order $2$ in $\mathrm{Out}(S)$, we have $|Y:S| = 2$ and $|Y|_2 = 2^{b+1} = 2(q-1)_2$.

For $A\in\mathrm{GL}_2(q)$, let $\widehat A$ denote its image in
$\mathrm{PGL}_2(q)$. Choose $\beta\in\mathbb{F}_q^\times$ of order
$2^b$, and set
\[
y=\widehat{\mathrm{diag}(\beta,1)},\qquad
w=\widehat{\begin{pmatrix}0&1\\-1&0\end{pmatrix}}\in S.
\]
Since $o(\beta)=2^b=(q-1)_2$, the element $\beta$ is a nonsquare in
$\mathbb{F}_q^\times$. Thus $y$ induces the nontrivial diagonal outer
automorphism and $\pi(y)=\overline\delta$. Moreover,
\[
o(y)=2^b,\qquad y^\sigma=y^{q_0},\qquad wyw=y^{-1},
\qquad [w,\sigma]=1,\qquad w^2=1.
\]
Define
\[
x=
\begin{cases}
y\sigma, & \text{if } q_0 \equiv 1 \pmod{4}, \\
yw\sigma, & \text{if } q_0 \equiv -1 \pmod{4}.
\end{cases}
\]
In either case, $\pi(x)=\overline{\delta\sigma} $, so $x\in Y$.
A direct calculation using the preceding relations gives
$o(x) = 2^b$ and $wxw = x^{\,2^{b-1}-1}$.
Moreover, $w\notin\langle x\rangle$, and hence
$|\langle x,w\rangle|=2^{b+1}=|Y|_2$. Thus $\langle x,w\rangle$ is a
Sylow $2$-subgroup of $Y$, with presentation
\[
\langle x,w :
x^{2^b}=w^2=1,~wxw=x^{2^{b-1}-1}
\rangle
\cong\mathsf{SD}_{2^{b+1}}.
\]

\begin{theorem}\label{thm:A1-even}
Let $A$ be an almost simple group with socle $S$.
Suppose that $2\mid|A:S|$. Then $A\in\mathbf{OC}_2$ if and only if
there are an odd prime $p$, an
integer $a\geq 1$, and an odd divisor $d$ of $a$ such that
\[
 S\cong\mathrm{PSL}_2(p^{2a}),\qquad
 A\cong\PGL_2^*(p^{2a})\rtimes\langle\phi^{2a/d}\rangle,
\]
where $\phi$ is the field automorphism induced by $t\mapsto t^p$ on
$\mathbb{F}_{p^{2a}}$. In this case, every $P\in\Syl_2(A)$ satisfies
$P\cong\SD_{2(p^{2a}-1)_2}$.
\end{theorem}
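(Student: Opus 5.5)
By Lemmas~\ref{lem:monolithic} and \ref{lem:socle-inheritance}, and Proposition~\ref{prop:even-index-reduction}, the plan is to reduce everything to one criterion and then to a computation inside $\Out(S)$. The criterion: for an almost simple $A$ whose socle $S$ lies in Table~\ref{tab:simple-list}, $A\in\mathbf{OC}_2$ if and only if $A\setminus S$ contains no involution. One direction is Lemma~\ref{lem:monolithic}(ii). For the other, if $\mathcal I(A)=\mathcal I(S)$ and $S\in\mathbf{OC}_2$, then $\mathcal I(A)$ is already one $S$-class. Proposition~\ref{prop:even-index-reduction} gives $S\cong\mathrm{PSL}_2(q)$ with $q=p^{2a}$ and $p$ odd, and this $S$ lies in the table. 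It therefore remains to find the subgroups $\overline A=A/S\le\Out(S)$ of even order such that no coset $gS\subseteq A\setminus S$ contains an involution. Any involution in $A\setminus S$ maps to an involution of $\overline A$, so only cosets lying over involutions of $\overline A$ need to be checked.

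Next I examine $\Out(S)=\langle\overline\delta\rangle\times\langle\pi(\phi)\rangle\cong\mathsf C_2\times\mathsf C_{2a}$. Its involutions are exactly $\overline\delta$, $\overline\sigma$ and $\overline{\delta\sigma}$.
\begin{itemize}[label=--]
\item The coset over $\overline\sigma$ contains the involution $\sigma$.
\item The coset over $\overline\delta$ contains the image of $\left(\begin{smallmatrix}0&1\\ \eta&0\end{smallmatrix}\right)$. Its square is the scalar $\eta I$, so it is an involution of $\PGL_2(q)$. Its determinant $-\eta$ is a nonsquare because $-1$ is a square when $q\equiv1\pmod 4$, so it lies outside $S$.
\item The coset over $\overline{\delta\sigma}$ contains no involution. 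I would use the Sylow computation preceding the theorem: $P=\langle x,w\rangle\cong\SD_{2^{b+1}}$ is a Sylow $2$-subgroup of $Y=\PGL_2^*(p^{2a})$. Every involution of $\SD_{2^{b+1}}$ lies in the maximal subgroup $\langle x^2,w\rangle\cong\mathsf D_{2^b}$, since the elements $wx^{\mathrm{odd}}$ have order $4$. This dihedral subgroup is $P\cap S$, because $x\notin S$ while $x^2,w\in S$ and $|P\cap S|=2^b$. Hence every involution of $Y$ lies in a conjugate of $P\cap S$, and so lies in $S$.
\end{itemize}

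It follows that $A\in\mathbf{OC}_2$ exactly when $\overline A$ has even order and $\overline{\delta\sigma}$ is its unique involution. Write $2a=2^e m$ with $m$ odd. The Sylow $2$-subgroup of $\overline A$ cannot be $\langle\overline{\delta\sigma}\rangle$ extended by an element of order $4$: such an element $\overline\delta^{\,i}\pi(\phi)^j$ has square $\pi(\phi)^{2j}$, and a nontrivial element of this form of order $2$ is $\overline\sigma$. So the Sylow $2$-subgroup of $\overline A$ is $\langle\overline{\delta\sigma}\rangle$, and its odd part is a subgroup of $\langle\pi(\phi)^{2^e}\rangle\cong\mathsf C_m$. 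That odd part is $\langle\pi(\phi)^{2a/d}\rangle$ for some odd $d\mid m$, equivalently an odd $d\mid a$. Conversely, every such $\overline A$ works. Taking preimages under $\pi$ gives $A=\PGL_2^*(p^{2a})\langle\phi^{2a/d}\rangle$. This product is semidirect because $\phi^{2a/d}$ has odd order $d$, normalizes $Y$ (as $\Out(S)$ is abelian), and meets $Y$ trivially. Since $|A:Y|=d$ is odd, each $P\in\Syl_2(A)$ is a Sylow $2$-subgroup of $Y$, which gives $P\cong\SD_{2^{b+1}}=\SD_{2(q-1)_2}$.

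The main obstacle I expect is the claim that the coset over $\overline{\delta\sigma}$ contains no involution. The argument above relies on the preceding order calculation $o(x)=2^b$, the relation $wxw=x^{2^{b-1}-1}$, and the identification $P\cap S=\langle x^2,w\rangle$. Each of these needs care, especially in the case $q_0\equiv-1\pmod 4$.
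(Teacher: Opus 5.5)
Your proof is correct, and its skeleton matches the paper's: the reduction to $S\cong\mathrm{PSL}_2(p^{2a})$; excluding the cosets over $\overline{\delta}$ and $\overline{\sigma}$; identifying the $2$-part of $A/S$ with $\langle\overline{\delta\sigma}\rangle$; and the odd part $\langle\phi^{2a/d}\rangle$. The paper's $G\cap A=S$, its $\sigma^{2^{k-1}}$ argument in the abelian group $GA/S$, and its elimination of $\mathrm{PGL}_2(p^{2a})$ and $S\rtimes\langle\phi^a\rangle$ are the same facts as your computation in $\Out(S)$. The real difference is how the semidihedral structure enters the forward direction. The paper derives it intrinsically from the $\mathbf{OC}_2$ hypothesis. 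Since $Q=P\cap S\cong\mathsf{D}_{2^b}$ and $P\setminus Q$ contains no involutions, $|\mathcal{I}(P)|\equiv 1\pmod 4$. Isaacs' counting theorem then forces $|P:P'|=4$, and the classification of $2$-groups of maximal class leaves only $\SD_{2^{b+1}}$. You bypass both external theorems by making the explicit construction $\langle x,w\rangle$ carry the weight in both directions. This gives the cleaner criterion $A\in\mathbf{OC}_2\iff\overline{A}\cap\{\overline{\delta},\overline{\sigma}\}=\emptyset$ and a more uniform argument. The cost is that everything rests on the sketched calculation $o(x)=2^b$, $wxw=x^{2^{b-1}-1}$, whereas the paper's forward direction checks the semidihedral structure independently. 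The paper's converse needs that calculation anyway, and it holds in both cases, so your flagged concern is resolved. For $q_0\equiv1\pmod4$ one gets $x^2=y^{1+q_0}$ and $wxw=y^{-1}\sigma$. For $q_0\equiv-1\pmod4$ one gets $x^2=y^{1-q_0}$ and $wxw=y^{-1}w\sigma$. In each case the exponent of $y$ in $x^2$ has $2$-part exactly $2$.
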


\begin{proof}
Assume first that $A \in \mathbf{OC}_2$. By Lemma \ref{lem:monolithic}(ii)
and Proposition~\ref{prop:even-index-reduction}, we have $S \cong \mathrm{PSL}_2(p^{2a})$ for some odd prime $p$ such that $\mathcal{I}(S)=\mathcal{I}(A)$.
Write $\mathrm{Aut}(S) = G \rtimes \langle \phi \rangle$ where $G : = \mathrm{Inndiag}(S)$, and identify $G$ with $\mathrm{PGL}_2(p^{2a})$.
Then $|G:S|=2$.
Since $G$ splits over $S$, it forces $G\cap A = S$.
Let $\langle \sigma\rangle$ be the Sylow 2-subgroup of $GA\cap\langle \phi\rangle(\cong GA/G\cong A/S)$.
We claim that $o(\sigma)=2$.
In fact, otherwise $o(\sigma)=2^{k}>2$;
since $\sigma S=gS \cdot aS$ for some $g\in G$ and $a\in A$ with $o(gS)\leq 2$,
and $GA/S$ is an abelian group,
it follows that $\sigma^{2^{k-1}}S=a^{2^{k-1}}S\in A/S$, whence $\sigma^{2^{k-1}}$ is an involution in $A\setminus S$; this contradicts $\mathcal{I}(S)=\mathcal{I}(A)$.
As $|A:S|=|AG:G|$, 
we conclude that 
$$|A:S|_2=|AG:G|_2=o(\sigma)=2.$$

Choose $P\in\Syl_2(A)$, and set $Q=P\cap S$. Then
$Q\in\Syl_2(S)$. By \cite[Chapter~2, Theorem~8.3]{Gorenstein}, the
Sylow $2$-subgroups of $\mathrm{SL}_2(p^{2a})$ are generalized
quaternion. Since
$\mathrm{Z}(\mathrm{SL}_2(p^{2a}))\cong\mathsf{C}_2$, it follows that
$Q\cong\mathsf{D}_{2^b}$, where $2^b=|S|_2$.
Since every involution of $A$ lies in $S$, every element of
$P\setminus Q$ has order greater than $2$, so that
\[
|\mathcal{I}(P)|=|\mathcal{I}(Q)|=2^{b-1}+1\equiv1\pmod4.
\]
By \cite[Theorem~4.9]{Isaacs}, $P'$ has index $4$ in $P$. Therefore,
by \cite[Chapter~5, Theorem~4.5]{Gorenstein},
\[
P\in\{\mathsf{D}_{2^{b+1}},\mathsf{Q}_{2^{b+1}},
\mathsf{SD}_{2^{b+1}}\}.
\]
The group $P$ has the maximal subgroup
$Q\cong\mathsf{D}_{2^b}$ and no involution in $P\setminus Q$.
Consequently, $P\cong\mathsf{SD}_{2^{b+1}}$.

The group $PS$ is isomorphic to neither $\mathrm{PGL}_2(p^{2a})$ nor
$S\rtimes\langle\phi^a\rangle$, since each of those groups contains an
involution outside $S$. Hence
$PS=\mathrm{PGL}_2^*(p^{2a})$. Moreover,
\[
A/S=PS/S\times\langle\phi^{2a/d}S\rangle
\]
for some odd divisor $d$ of $a$. Since
$PS\cap\langle\phi^{2a/d}\rangle
\leq S\cap\langle\phi^{2a/d}\rangle=1$,
we obtain
$A=PS\rtimes\langle\phi^{2a/d}\rangle$.

\medskip
Conversely, suppose that $A$ and $S$ have the stated properties. The calculation preceding this theorem shows that every $P\in\Syl_2(A)$ satisfies
$P\cong\mathsf{SD}_{2(p^{2a}-1)_2}$.
Set $Q=P\cap S$. Then $Q\in\Syl_2(S)$ and
$Q\cong\mathsf{D}_{(p^{2a}-1)_2}$. 
Every $2$-element of $A\setminus S$
lies in $P^g\setminus Q^g$ for some $g\in A$. 
The semidihedral
structure of $P$ shows that no such element is an involution. Finally,
$S\in\mathbf{OC}_2$ by Lemma~\ref{lem:simple-list}, and hence
$A\in\mathbf{OC}_2$.
\end{proof}

Combining Lemma~\ref{lem:simple-list} with
Theorem~\ref{thm:A1-even} gives the classification for both parities of
the outer index.

\begin{theorem}\label{thm:almost-simple}
Let $A$ be an almost simple group with socle $S$.  
Then
$A\in\mathbf{OC}_2$ if and only if exactly one of the following holds:
\begin{enumerate}[label=\textup{(\roman*)}]
\item $|A:S|$ is odd and $S$ occurs in Table~\ref{tab:simple-list};
\item $S\cong \mathrm{PSL}_{2}(p^{2a})$ and
$A\cong\PGL_2^*(p^{2a})\rtimes \langle\phi^{2a/d}\rangle$,
where $p$ is an odd prime, $a\geq1$, $d$ is an odd divisor of $a$, and 
$\phi$ is the field automorphism of $\mathrm{PSL}_{2}(p^{2a})$ induced by $t\mapsto t^{p}$ on $\mathbb{F}_{p^{2a}}$.
\end{enumerate}
\end{theorem}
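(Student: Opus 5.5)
This is a direct combination of Lemma~\ref{lem:simple-list} and Theorem~\ref{thm:A1-even}. The plan is to split according to the parity of $|A:S|$. The word ``exactly'' then comes for free: condition (i) requires $|A:S|$ odd, while condition (ii) forces $|A:S|$ even. Indeed, in (ii) we have $|\PGL_2^*(p^{2a}):S|=2$, and $|\langle\phi^{2a/d}\rangle|=d$ is odd, as computed in the discussion preceding Theorem~\ref{thm:A1-even}. So (i) and (ii) are mutually exclusive, and it suffices to prove that $A\in\mathbf{OC}_2$ if and only if (i) or (ii) holds.

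Suppose first that $|A:S|$ is odd. Lemma~\ref{lem:simple-list} gives $A\in\mathbf{OC}_2$ if and only if $S$ occurs in Table~\ref{tab:simple-list}, which is exactly condition (i). Condition (ii) cannot hold, by the parity remark above. Hence the equivalence holds in this case.

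Suppose next that $|A:S|$ is even. Condition (i) fails, and Theorem~\ref{thm:A1-even} states that $A\in\mathbf{OC}_2$ if and only if $A$ has the form described in (ii). Combining the two cases gives the theorem.

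The only point needing care is the parity of the index in (ii). I would verify it explicitly: $A/S$ contains $PS/S$ of order $2$, and the odd-order cyclic factor $\langle\phi^{2a/d}\rangle$ intersects $\PGL_2^*(p^{2a})$ trivially, as noted at the end of the forward direction of the proof of Theorem~\ref{thm:A1-even}. Hence $|A:S|=2d$, which is even. There is no real obstacle; the argument is a bookkeeping assembly of the two earlier results.
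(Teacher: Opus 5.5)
Your proposal is correct and is essentially the paper's proof: split by the parity of $|A:S|$, then apply Lemma~\ref{lem:simple-list} for odd index and Theorem~\ref{thm:A1-even} for even index. Your explicit check that (ii) forces $|A:S|=2d$ to be even is a useful addition. The paper leaves implicit both that (i) and (ii) are mutually exclusive and that $2\mid|A:S|$ holds, which is needed to invoke the converse direction of Theorem~\ref{thm:A1-even}.
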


\begin{proof}
Suppose first that $A\in\mathbf{OC}_2$. If $|A:S|$ is odd, then
Lemmas~\ref{lem:socle-inheritance} and \ref{lem:simple-list}
give (i). If $|A:S|$ is even, Theorem~\ref{thm:A1-even} gives precisely
the groups in (ii).

Conversely, assume (i). Lemma~\ref{lem:simple-list} gives
$S\in\mathbf{OC}_2$, and the converse direction of
Lemma~\ref{lem:simple-list} yields $A\in\mathbf{OC}_2$. If (ii)
holds, the converse direction of Theorem~\ref{thm:A1-even} gives the
same conclusion.
\end{proof}

\begin{lemma}\label{lem:mixed-redundant}
  Let $q$ be a power of an odd prime $p$, and let $\epsilon\in \{ \pm \}$.
  If $q\equiv -\epsilon 1 \pmod 4$,
  then $\mathsf{SD}_{2(q^{2}-1)_2}$ is isomorphic to a Sylow $2$-subgroup of $\mathrm{PSL}^{\epsilon}_{3}(q)$.
\end{lemma}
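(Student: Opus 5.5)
The plan is to identify a Sylow $2$-subgroup of $\mathrm{PSL}^{\epsilon}_3(q)$ inside $\mathrm{GL}^{\epsilon}_2(q)$. The setting is $q\equiv-\epsilon1\pmod 4$, so $\gcd(3,q-\epsilon1)$ is odd and the centre of $\mathrm{SL}^{\epsilon}_3(q)$ has odd order. Hence $\mathrm{SL}^{\epsilon}_3(q)$ and $\mathrm{PSL}^{\epsilon}_3(q)$ have isomorphic Sylow $2$-subgroups, and it suffices to work in $\mathrm{SL}^{\epsilon}_3(q)$.

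First we embed $\mathrm{GL}^{\epsilon}_2(q)$ into $\mathrm{SL}^{\epsilon}_3(q)$ by $g\mapsto\mathrm{diag}(g,\det(g)^{-1})$. In the unitary case we use an orthogonal decomposition of the $3$-dimensional hermitian space into a nondegenerate plane and a line, so that $\det(g)^{-1}$ lies in the norm-one group. Next we compare orders. We have
\[
|\mathrm{SL}^{\epsilon}_3(q)|_2=\bigl(q^3(q^2-1)(q^3-\epsilon1)\bigr)_2=(q^2-1)_2(q-\epsilon1)_2 ,
\]
because $(q^2+\epsilon q+1)$ is odd. Also $|\mathrm{GL}^{\epsilon}_2(q)|_2=(q^2-1)_2(q-\epsilon1)_2$. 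So a Sylow $2$-subgroup of $\mathrm{GL}^{\epsilon}_2(q)$ is already Sylow in $\mathrm{SL}^{\epsilon}_3(q)$. Note that $(q-\epsilon1)_2=2$ under our congruence, so this common order is $2(q^2-1)_2$.

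It remains to show that a Sylow $2$-subgroup of $\mathrm{GL}^{\epsilon}_2(q)$ with $q\equiv-\epsilon1\pmod4$ is $\mathsf{SD}_{2(q^2-1)_2}$. For the linear case ($q\equiv3\pmod4$), take the Singer cycle $\mathbb{F}_{q^2}^\times\le\mathrm{GL}_2(q)$ together with the Frobenius $t\mapsto t^q$. They generate $\mathbb{F}_{q^2}^\times\rtimes\langle\tau\rangle$, of order $2(q^2-1)$. Its Sylow $2$-subgroup is $\langle c\rangle\rtimes\langle\tau\rangle$, where $o(c)=2^{m}=(q^2-1)_2$ and $\tau c\tau=c^q$. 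Since $q\equiv-1\pmod 4$ and $2^m\mid q^2-1$ with $(q+1)_2=2^{m-1}$, one gets $q\equiv 2^{m-1}-1\pmod{2^m}$. This gives exactly the semidihedral relation $\tau c\tau=c^{2^{m-1}-1}$ (note $m\ge3$), and the order $2^{m+1}$ matches.

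For the unitary case ($q\equiv1\pmod4$), the key step is the same construction in $\mathrm{GU}_2(q)$. The torus $\mathbb{F}_{q^2}^\times$ does not sit in $\mathrm{GU}_2(q)$ directly. Instead I would use that the Sylow $2$-subgroups of $\mathrm{GU}_2(q)$ and $\mathrm{GL}_2(q)$ are isomorphic when $q\equiv1\pmod4$ and $-q\equiv3\pmod 4$. A concrete route is the standard isomorphism $\mathrm{GU}_2(q)\cong(\mathrm{SL}_2(q)\rtimes\ldots)$, or more directly the fact that $\mathrm{GU}_2(q)$ contains $\mathrm{GL}_1(q^2)\rtimes\mathsf{C}_2$ acting on an $\mathbb{F}_{q^2}$-basis of two isotropic vectors, $\lambda\mapsto\mathrm{diag}(\lambda,\lambda^{-q})$, together with the swap.

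I expect this unitary computation to be the main obstacle. The cleanest alternative is the following. The element $\mathrm{diag}(\lambda,\lambda^{-q})$ has order dividing $q^2-1$, and we can choose $\lambda$ of order $2^m=(q^2-1)_2$. Conjugation by the swap sends $\lambda\mapsto\lambda^{-q}$. Now $-q\equiv2^{m-1}-1\pmod{2^m}$ holds because $(q-1)_2=2^{m-1}$ and $q\equiv1\pmod4$. So we again get $\mathsf{SD}_{2^{m+1}}$, after checking that the swap, suitably scaled, is an involution. Since the order $2^{m+1}$ equals $|\mathrm{SL}_3^{-}(q)|_2$, this finishes the argument.
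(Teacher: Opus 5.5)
Your proof is correct and follows the paper's: the odd centre, the embedding $g\mapsto\mathrm{diag}(g,\det(g)^{-1})$ and the $2$-part comparison are the same, and the only difference is that the paper cites Carter--Fong for the semidihedral Sylow $2$-subgroup of $\mathrm{GL}^{\epsilon}_2(q)$, whereas you verify it directly with the Singer cycle plus Frobenius for $\epsilon=+$ and the torus $\mathrm{diag}(\lambda,\lambda^{-q})$ in a hyperbolic basis plus the swap for $\epsilon=-$ (no scaling is needed, since with $(e,f)=1$ the swap is already an isometric involution). Delete the aside that Sylow $2$-subgroups of $\mathrm{GU}_2(q)$ and $\mathrm{GL}_2(q)$ are isomorphic for $q\equiv1\pmod 4$: it is false, because their orders $2(q^2-1)_2$ and $(q^2-1)_2(q-1)_2$ differ, and your explicit computation never uses it.
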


\begin{proof}
Let $H=\mathrm{SL}_3^{\epsilon}(q)$ and
$Z=\mathrm{Z}(H)\cong\mathsf{C}_d$, where
$d=\gcd(3,q-\epsilon1)$. Choose $P\in\Syl_2(H)$. Since $|Z|$ is odd,
the image of $P$ in
$H/Z=\mathrm{PSL}_3^{\epsilon}(q)$ is a Sylow $2$-subgroup isomorphic to
$P$.

The map $x\mapsto\mathrm{diag}(x,\det(x)^{-1})$ embeds
$\mathrm{GL}_2^{\epsilon }(q)$ into $\mathrm{SL}_3^{\epsilon }(q)$. Moreover,
\[
|\mathrm{GL}_2^{\epsilon }(q)|_2
=(q^2-1)_2(q-\epsilon1)_2
=|\mathrm{SL}_3^{\epsilon}( q)|_2,
\]
so $P$ is isomorphic to a Sylow $2$-subgroup of
$\mathrm{GL}_2^{\epsilon}( q)$.

Since $q\equiv-\epsilon1\pmod4$, we have
$(q-\epsilon1)_2=2$ and therefore
$|\mathrm{GL}_2^{\epsilon}( q)|_2=2(q^2-1)_2$. 
Consequently, it follows from
\cite[pp.~142--143]{CF} that
$P\cong\mathsf{SD}_{2(q^2-1)_2}$.
\end{proof}

Lemma~\ref{lem:mixed-redundant} shows that the semidihedral
Sylow $2$-subgroups arising at even outer index already occur
in simple groups in Table~\ref{tab:simple-list}.
Thus the almost simple case contributes precisely the Sylow
$2$-subgroups described in the following corollary.

\begin{corollary}
\label{cor:almost-sylow}
A finite $2$-group $P$ occurs as a Sylow $2$-subgroup of an almost simple
group in $\mathbf{OC}_2$ if and only if $P$ is isomorphic to a Sylow
$2$-subgroup of a simple group in Table~\ref{tab:simple-list}.
\end{corollary}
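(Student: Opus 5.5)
The plan is to prove both directions of Corollary~\ref{cor:almost-sylow} by combining Theorem~\ref{thm:almost-simple} with Lemma~\ref{lem:mixed-redundant}.

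For the ``if'' direction, let $P$ be isomorphic to a Sylow $2$-subgroup of a simple group $S$ in Table~\ref{tab:simple-list}. Take $A=S$. Then $A$ is almost simple with $|A:S|=1$ odd, so Theorem~\ref{thm:almost-simple}(i) (equivalently, Lemma~\ref{lem:simple-list}) gives $A\in\mathbf{OC}_2$. Hence $P$ occurs.

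For the ``only if'' direction, let $A\in\mathbf{OC}_2$ be almost simple with socle $S$, and let $P\in\Syl_2(A)$. Theorem~\ref{thm:almost-simple} leaves two cases.

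In case (i), $|A:S|$ is odd. Then $P\leq S$, since $PS/S$ is a $2$-subgroup of a group of odd order. So $P\in\Syl_2(S)$, and $S$ lies in Table~\ref{tab:simple-list}.

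In case (ii), Theorem~\ref{thm:A1-even} gives $P\cong\SD_{2(q^2-1)_2}$, where $q=p^a$ is a power of the odd prime $p$ and $p^{2a}=q^2$. Choose $\epsilon\in\{\pm\}$ with $q\equiv-\epsilon1\pmod4$. This is always possible because $q$ is odd: take $\epsilon=-$ if $q\equiv1\pmod 4$ and $\epsilon=+$ if $q\equiv3\pmod4$. Lemma~\ref{lem:mixed-redundant} then identifies $P$ with a Sylow $2$-subgroup of $\mathrm{PSL}_3^{\epsilon}(q)$.

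It remains to check that $\mathrm{PSL}_3^{\epsilon}(q)$ appears in Table~\ref{tab:simple-list}. The group $\mathrm{PSL}_3(q)$ is listed for all $q\geq2$. The group $\PSU_3(q)$ is listed for $q\geq3$, which holds because $q$ is odd.

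The only delicate point is this bookkeeping of the sign $\epsilon$ and the range of $q$ in the table; with that settled, the proof is routine assembly of the earlier results.
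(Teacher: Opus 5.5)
Your proposal is correct and follows the paper's proof essentially verbatim. Case (i) of Theorem~\ref{thm:almost-simple} gives $P\in\Syl_2(S)$; case (ii) combines Theorem~\ref{thm:A1-even} with Lemma~\ref{lem:mixed-redundant} to land in $\mathrm{PSL}_3(p^a)$ or $\PSU_3(p^a)$; and the converse takes $A=S$ via Lemma~\ref{lem:simple-list}. Your explicit choice of $\epsilon$ according to $q \bmod 4$, and your check that $q\geq 3$ for the unitary entry, spell out bookkeeping that the paper leaves implicit.
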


\begin{proof}
Let $P\in\Syl_2(A)$, where $A\in\mathbf{OC}_2$ is almost simple with
socle $S$. If case (i) of Theorem~\ref{thm:almost-simple} holds, then
$|A:S|$ is odd, so $P\in\Syl_2(S)$ and $S$ occurs in
Table~\ref{tab:simple-list}. If case (ii) holds, then
Theorem~\ref{thm:A1-even} and Lemma~\ref{lem:mixed-redundant} show that
$P$ is isomorphic to a Sylow $2$-subgroup of either
$\mathrm{PSL}_3(p^a)$ or $\mathrm{PSU}_3(p^a)$, both of which occur in
Table~\ref{tab:simple-list}.

Conversely, if $P$ is isomorphic to a Sylow $2$-subgroup of a simple
group $S$ in Table~\ref{tab:simple-list}, then
Lemma~\ref{lem:simple-list} gives $S\in\mathbf{OC}_2$, proving
the result.
\end{proof}

\section{Proof of the main result}

\begin{proof}[Proof of Theorem~\ref{thm:brauer}]
Suppose first that $G\in\mathbf{OC}_2$ and $P\in\Syl_2(G)$. 
Set
$N=\OddCore{G}$ and $\overline G=G/N$. 
By Lemma~\ref{lem:oddcore}, we also have
$\overline G\in\mathbf{OC}_2$.

Lemma~\ref{lem:monolithic} now gives two cases. If
$\soc(\overline G)$ is elementary abelian, then
Theorem~\ref{thm:solvable} shows that $P$ is homocyclic, generalized
quaternion, or a Suzuki $2$-group. Otherwise, $\overline G$ is almost
simple, and Corollary~\ref{cor:almost-sylow} shows that $P$ is
isomorphic to a Sylow $2$-subgroup of one of the simple groups in
Table~\ref{tab:simple-list}. This proves the necessity of conditions
\textup{(i)}--\textup{(iv)}.

Conversely, the converse direction of Theorem~\ref{thm:solvable}
realizes every nontrivial group in \textup{(i)}--\textup{(iii)} as a
Sylow $2$-subgroup of a group in $\mathbf{OC}_2$. If \textup{(iv)}
holds, let $S$ be the corresponding simple group in
Table~\ref{tab:simple-list}. Lemma~\ref{lem:simple-list} gives
$S\in\mathbf{OC}_2$, so $S$ itself provides the required realization.
\end{proof}

\section*{Acknowledgements}
The first author was supported by  NSF of China (Grant No. 11871011) and is now supported by Huaqiao University Research Startup Funds (Grant No. 605-50Y21007);
the second author is supported by NSF of China (Grant No. 12671032).

The authors are also grateful to the referee for her/his valuable comments.
\enlargethispage{2\baselineskip}

\end{document}